\newtheorem{theorem}{Theorem}[section]
\newtheorem{thm}[theorem]{Theorem}
\newtheorem{lemma}[theorem]{Lemma}
\newtheorem{corollary}[theorem]{Corollary}
\newtheorem{defn}[theorem]{Definition}
\newtheorem{proposition}[theorem]{Proposition}
\newtheorem{observation}[theorem]{Observation}
\newtheorem{assumption}[theorem]{Assumption}
  \newcommand{\bC}{\mathbb{C}}   \newcommand{\bF}{\mathbb{F}}          
 \newcommand{\bR}{\mathbb{R}}        \newcommand{\bZ}{\mathbb{Z}}
\let\c@equation\c@theorem
\numberwithin{equation}{section}
\newcommand{\sma}{\wedge} 
\newcommand{\TR}{\textnormal{TR}}
\newcommand{\TC}{\textnormal{TC}}
\begin{document}

\begin{frontmatter}

\title{$RO(S^1)$-graded $\TR$--groups of $\bF_p$, $\bZ$ and $\ell$}
\author{Vigleik Angeltveit\corref{cor1}}
\ead{vigleik@math.uchicago.edu}
\address{Department of Mathematics, University of Chicago, 5734 S University Ave, Chicago IL 60637} 
\author{Teena Gerhardt}
\cortext[cor1]{Partially supported by the NSF}
\ead{teena@math.msu.edu}
\address{A-218 Wells Hall, Michigan State University, East Lansing, MI 48824}


\begin{abstract}
We give an algorithm for calculating the $RO(S^1)$-graded $\TR$--groups of $\bF_p$, completing the calculation started by the second author. We also calculate the $RO(S^1)$-graded $\TR$--groups of $\bZ$ with mod $p$ coefficients and of the Adams summand $\ell$ of connective complex $K$-theory with $V(1)$-coefficients. Some of these calculations are used elsewhere to compute the algebraic $K$-theory of certain $\bZ$-algebras.
\end{abstract}

\end{frontmatter}

\section{Introduction}

Higher algebraic $K$-theory associates to a ring or ring spectrum $A$ a spectrum $K(A)$ and a sequence of abelian groups $K_i(A)$ which are the homotopy groups of this spectrum. Although higher algebraic $K$-theory was defined more than 30 years ago, computational progress has been slow. While the definition of algebraic $K$-theory is not inherently equivariant, the tools of equivariant stable homotopy theory have proven useful for $K$-theory computations via trace methods \cite{BHM}. The equivariant stable homotopy computations in this paper serve as input for these methods. In particular they have been used in the computations of the relative algebraic $K$-theory groups $K_*(\bZ[x]/(x^m),(x))$ and  $K_*(\bZ[x,y]/(xy),(x,y))$ up to extensions (see \cite{AnGeHe} and \cite{AnGe2} respectively).

The idea behind the trace methods is to approximate algebraic $K$-theory  by invariants of ring spectra which are more computable. The first approximation is topological Hochschild homology \cite{Bo1}, $T(A)$. This is significantly easier to compute than algebraic $K$-theory and there is a trace map $K(A) \rightarrow T(A)$ called the topological Dennis trace. A refinement of topological Hochschild homology called topological cyclic homology, TC$(A)$, serves as an even better approximation of algebraic $K$-theory. Indeed, there is a map $trc: K(A) \rightarrow \TC(A)$ called the cyclotomic trace \cite{BHM}  which is often close to an equivalence \cite{HeMa97, Mc97, GeHe06}. So in good cases trace methods reduce the computation of algebraic $K$-theory, $K_q(A)$, to that of topological cyclic homology, $\TC_q(A)$.

Topological cyclic homology is defined as a homotopy limit of certain fixed points of topological Hochschild homology. Let $p$ be a prime. The circle $S^1$ acts on $T(A)$ and we define $\TR^n(A;p)=T(A)^{C_{p^{n-1}}}$ to be the fixed point spectrum under the action of the cyclic group of order $p^{n-1}$ considered as a subgroup of $S^1$. It is important that $T(A)$ is a genuine $S^1$-equivariant spectrum, i.e., the spaces of $T(A)$ are indexed on a complete universe of $S^1$-representations. For a genuine $G$-spectrum $E$, the $H$-fixed point spectrum $E^H$ for $H \subset G$ has $n$'th space $E(\bR^n)^H$.


These spectra are connected by maps $R$, $F$, $V$ and $d$ \cite{HeMa05}, and a homotopy limit over $R$ and $F$ gives us the topological cyclic homology spectrum $\TC(A;p)$. Therefore to compute topological cyclic homology, and hence algebraic $K$-theory in good cases, it is sufficient to understand  $\TR^n(A;p)$ together with $R, F : \TR^{n+1}_*(A;p) \to \TR^n_*(A;p)$ for each $p$ and $n$. The homotopy groups of these spectra are denoted
$$
\TR^n_q(A;p) = [S^q \wedge S^1/C_{p^{n-1}+}, T(A)]_{S^1}.
$$
Throughout this paper the prime $p$ will be implicit. Hence we will write $\TR^n_q(A)$ for $\TR^n_q(A;p)$ and $\TC(A)$ for $\TC(A;p)$.

One type of singular ring for which the algebraic $K$-theory is particularly approachable is a pointed monoid algebra, $A(\Pi)$. This approach was first used by Hesselholt and Madsen \cite{HeMa97b} to compute the algebraic $K$-theory of $\mathbb{F}_p[x]/x^m$. To compute the $K$-theory of $A(\Pi)$ using the approach outlined above one first needs to understand the topological Hochschild homology $T(A(\Pi))$. Hesselholt and Madsen \cite{HeMa97} proved that there is an equivalence of $S^1$-spectra
\begin{equation} \label{eq:TAPisplitting}
T(A(\Pi)) \simeq T(A) \wedge B^{cy}(\Pi),
\end{equation}
where $B^{cy}(\Pi)$ denotes the cyclic bar construction on the pointed monoid $\Pi$. As above, trace methods essentially reduce the computation of $K_q(A(\Pi))$ to that of
$$
\TR^n_q(A(\Pi)) = \pi_q(T(A(\Pi))^{C_{p^{n-1}}}) = [S^q \wedge S^1/C_{p^{n-1}+}, T(A(\Pi))]_{S^1}.
$$
Using Equation \ref{eq:TAPisplitting} we can rewrite this as
$$
\TR^n_q(A(\Pi)) =[S^q \wedge S^1/C_{p^{n-1}+}, T(A) \wedge B^{cy}(\Pi)]_{S^1}.
$$
If one can understand how $B^{cy}(\Pi)$ is built out of $S^1$-representation spheres this gives a formula for these TR-groups in terms of groups of the form
$$
\TR^n_{q-\lambda}(A) =[S^q \wedge S^1/C_{p^{n-1}+}, T(A) \wedge S^{\lambda}]_{S^1}.
$$
Here $\lambda$ is a finite-dimensional $S^1$-representation and $S^{\lambda}$ denotes the one-point compactification of this representation. These groups are $RO(S^1)$-graded equivariant homotopy groups of the $S^1$-spectrum $T(A)$.  Recall that $RO(S^1)$ is the ring of virtual real representations of $S^1$, meaning that an element $\alpha \in RO(S^1)$ can be written as
\[ \alpha = [\beta]-[\gamma] \]
where $\beta$ and $\gamma$ are finite-dimensional real $S^1$-representations.  For $\alpha=[\beta]-[\gamma]$ in $RO(S^1)$ the $\TR$-group $\TR^n_\alpha(A)$ is defined by
\[ \TR^n_\alpha(A) = \pi_\alpha T(A)^{C_{p^{n-1}}} = [S^\beta \sma S^1/C_{p^{n-1}+}, S^\gamma \sma T(A)]_{S^1},\]
generalizing the integer-graded $\TR$-groups. As described above, these $RO(S^1)$-graded TR-groups arise naturally in the computation of the algebraic $K$-theory of some singular rings. Indeed, in some cases the computation of the algebraic $K$-theory groups $K_q(A(\Pi))$ can be reduced to the computation of the $RO(S^1)$-graded TR-groups $\TR^n_{q-\lambda}(A)$. However, few computations of these $RO(S^1)$-graded TR-groups have been done. The groups $\TR^n_{\alpha}(A)$ are only known in general when $A = \mathbb{F}_p$ and the dimension of $\alpha$ is even \cite{Ge07}. The current paper broadly extends what is known about $RO(S^1)$-graded TR-groups, making computations for $A = \mathbb{F}_p, \mathbb{Z},$ and $\ell$.

We use the results of this paper in \cite{AnGeHe}, which is joint work with Lars Hesselholt, to compute the relative $K$-groups $K_*(\bZ[x]/(x^m),(x))$ up to extensions, and in \cite{AnGe2} to compute the relative $K$-groups $K_*(\bZ[x,y]/(xy),(x,y))$ up to extensions. Theorem \ref{thm:mainL} below is the necessary input to the trace method approach described above, allowing us to make such computations. For example, we compute the relative $\TC$-groups $\TC_*(\bZ[x]/(x^m), (x); \bZ/p)$. Combined with a rational computation this tells us the rank and the number of torsion summands in each degree and in particular that
\[ \TC_{2i+1}(\bZ[x]/(x^m), (x)) \cong \bZ^{m-1} \]
is torsion free. An Euler characteristic argument then gives the order of the torsion groups.

The computations in this paper are also motivated by our interest in understanding the algebraic structure satisfied by the $RO(S^1)$-graded $\TR$-groups. The algebraic structure satisfied by the ordinary ($\bZ$-graded) $\TR$-groups is very rigid and this has proven quite useful \cite{HeMa97, HeMa03}, for example by considering the universal example. A better understanding of the algebraic structure of the $RO(S^1)$-graded $\TR$-groups should be similarly useful, and this is an area for further study. The computations in this paper provide important examples that we hope will be helpful in this regard.

Note that in cases where computing $\TR^n_*(A)$ with integral coefficients proves to be too difficult one can instead consider the groups $\TR^n_*(A;V)=\pi_*( T(A)^{C_{p^{n-1}}} \sma V)$ for a suitable finite complex $V$. For instance, smashing with the mod $p$ Moore spectrum $V(0)=S/p$ was used in \cite{BoMa} to compute the mod $p$ groups $\TR^n_*(\bZ;V(0)) = \TR^n_*(\bZ; \bZ/p)$ for $p \geq 3$. Similarly, smashing with the Smith-Toda complex $V(1) = S/(p,v_1)$ was used in \cite{AuRo} to compute $\TR^n_*(\ell; V(1))$ for $p \geq 5$. Here $\ell$ is the Adams summand of connective complex $K$-theory localized at $p$. In both of these cases, the $*$ refers to an integer grading. We will use this technique of smashing with a finite complex in our computations, which are $RO(S^1)$-graded.


In this paper we calculate $\TR^n_\alpha(\mathbb{F}_p)$, the $RO(S^1)$-graded TR-groups of $\mathbb{F}_p$,  $\TR^n_\alpha(\mathbb{Z}; V(0))$, the $RO(S^1)$-graded TR-groups of $\mathbb{Z}$ with mod $p$ coefficients, and  $\TR^n_\alpha(\ell; V(1))$, the $RO(S^1)$-graded TR-groups of $\ell$ with $V(1)$ coefficients. For the last case we assume $p \geq 5$, as $V(1)$ does not exist at $p=2$ and is not a ring spectrum at $p=3$. If $V$ is a ring spectrum, $\TR^n_{\alpha+*}(A; V)$ for fixed $\alpha$ will be a module over the integer-graded $\TR^n_*(A;V)$. While $V(0)$ is not a ring spectrum at $p=2$, our computation of $\TR^n_\alpha(\bZ; V(0))$ is still valid additively. This depends on a clever extension of the integer-graded computation of $\TR^n_*(\bZ;V(0))$ to the case $p=2$ that was carried out by Rognes in \cite{Ro99b}, using that $V(0)$ is a module over the mod $4$ Moore spectrum $S/4$.

The calculations in these three cases are essentially identical. To treat all three cases simultaneously, we introduce an integer $c \geq 0$, the chromatic level. If $c=0$ we let $A=\bF_p$ and use integral coefficients. If $c=1$ we let $A=\bZ$ and use mod $p$ coefficients. If $c=2$ we let $A=\ell$ and use $V(1)$-coefficients. Given a prime $p$ such that the spectrum $BP\langle c \rangle$ with homotopy groups $\bZ_{(p)}[v_1,\ldots,v_c]$ (or its $p$-completion) is $E_\infty$ and the Smith-Toda complex $V(c)$ exists and is a ring spectrum, the obvious generalization of the calculations in the paper applies.

In light of the problems with $V(1)$ mentioned above at $p=2$ and $p=3$, the following restriction on $p$ will be in force throughout the paper:
\begin{assumption}
If $c=0$ or $c=1$, $p$ can be any prime. If $c=2$, we assume $p \geq 5$.
\end{assumption}
The case $c=1$, $p=2$ is special, and in those arguments where we would normally use a ring structure (e.g.\ the proof of Theorem \ref{thm:hotoTRSS}) we have to instead use a module structure over the corresponding object with mod $4$ coefficients.

To state some of these results, we must first introduce some notation. Given a virtual real representation $\alpha \in RO(S^1)$, we define a prime operation by $\alpha'=\rho_p^* \alpha^{C_p}$ where $\rho_p : S^1 \to S^1/C_p$ is the isomorphism given by the $p$'th root \cite{Ge07}. We let $\alpha^{(n)}$ denote the $n$-fold iterated prime operation applied to $\alpha$. A real $S^1$-representation can be decomposed as a direct sum of copies of the trivial representation $\bR$ and the $2$-dimensional representations $\bC(n)$ with action given by $\lambda \cdot z = \lambda^n z$ for $n \geq 1$. The prime operation acts on these summands as follows:
\[ \bC(n)' =
\begin{cases}
\bC(\frac{n}{p}) & \text{ if $p \mid n$,} \\
0 & \text{otherwise.}
\end{cases} \]
and $\bR' = \bR$.

Given a virtual real representation $\mu$, we often write $\mu=\alpha+q$ as a sum of a complex representation $\alpha \in R(S^1)$ and a trivial representation $q \in \bZ$. Let $d_i(\alpha)=\dim_\bC(\alpha^{(i)})$. The $RO(S^1)$-graded $\TR$-groups considered in this paper all have the property that $\TR^n_{\alpha+*}$ for $* \in \bZ$ is determined by the sequence of integers
\[ d_0(\alpha), \ldots, d_{n-1}(\alpha).\]
Given any sequence of integers $d_0,\ldots,d_{n-1}$ it is possible to find a virtual representation $\alpha$ with $d_i=d_i(\alpha)$ for each $i$. Conversely, the $p$-homotopy type of $S^\alpha$ as a $C_{p^{n-1}}$-equivariant spectrum is determined by the integers $d_0(\alpha), \ldots, d_{n-1}(\alpha)$, so the fact that $\TR^n_{\alpha+*}$ is determined by these integers is perhaps not surprising. If $\alpha=\lambda$ or $\alpha=-\lambda$ for an actual representation $\lambda$, this sequence of integers is non-increasing or non-decreasing, respectively, and the TR-calculations simplify.

Fix an integer $c \in \{0, 1, 2\}$, and define
\begin{equation} \label{def:delta}
\delta_c^n(\alpha) = -d_0(\alpha)+\sum_{1 \leq k \leq n-1} \big[ d_{k-1}(\alpha)-d_k(\alpha) \big] p^{ck}.
\end{equation}
If $c=0$, let $A=\bF_p$ and $V=S^0$. If $c=1$, let $A=\bZ$ and $V=V(0)$. If $c=2$, let $A=\ell$ and $V=V(1)$.
We prove in Theorem \ref{thm:TateSS} below that in the stable range, i.e.\ for $q$ sufficiently large with respect to the integers $-d_i(\lambda)$, we have
\[ \TR^n_{\alpha+q}(A;V) \cong \TR^n_{q-2\delta^n_c(\alpha)}(A;V).\]
A similar result was obtained by Tsalidis \cite{Ts_pre} in the case $c=1$ for $\alpha=-\lambda$ where $\lambda$ is an actual $S^1$-representation.

We highlight the following result, which is essential to the $K$-theory computations in \cite{AnGeHe} and \cite{AnGe2}:
\begin{theorem} \label{thm:mainL}
Let $\lambda$ be a finite complex $S^1$-representation. Then for any prime $p$ the finite $\mathbb{Z}_{(p)}$-modules $\TR^n_{q - \lambda}(\mathbb{Z}; \mathbb{Z}/p)$ have the following structure:
\begin{enumerate}
\item For $ q\geq 2 d_0(\lambda), \TR^n_{q-\lambda}(\mathbb{Z}; \mathbb{Z}/p)$ has length $n$, if $q$ is congruent to $2\delta^n_1(\lambda)$ or $2\delta^n_1(\lambda) - 1$ modulo $2p^n$, and $n-1$ otherwise.
\item For  $2d_s(\lambda) \leq q < 2 d_{s-1}(\lambda)$ with $1 \leq s < n, \TR^n_{q - \lambda}(\mathbb{Z}; \mathbb{Z}/p)$ has length $n-s$ if $q$ is congruent to $2\delta^{n-s}_1(\lambda^{(s)})$ or $2 \delta^{n-s}_1(\lambda^{(s)})-1$ modulo $2p^{n-s}$ and $n-s-1$, otherwise.
\item For $q<2d_{n-1}(\lambda), \TR^n_{q-\lambda}(\mathbb{Z}; \mathbb{Z}/p)$ is zero.
\end{enumerate}
\end{theorem}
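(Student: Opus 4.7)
The strategy is to reduce each of the three cases to the integer-graded computation of $\TR^n_*(\bZ; \bZ/p)$ of B\"okstedt--Madsen (with Rognes's extension to $p = 2$), using Theorem \ref{thm:TateSS} to bridge the $RO(S^1)$-graded and integer-graded worlds, together with the cyclotomic restriction cofiber sequence to reduce $\TR^n$ to $\TR^{n-s}$ in intermediate ranges. Because $\lambda$ is an actual complex representation, the sequence $d_0(\lambda) \geq d_1(\lambda) \geq \cdots \geq d_{n-1}(\lambda) \geq 0$ is non-increasing, so the three ranges of $q$ in the theorem do partition the integers from $2 d_{n-1}(\lambda)$ upward, with vanishing below.

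For Case (1), since $q \geq 2 d_0(\lambda)$ lies in the stable range, Theorem \ref{thm:TateSS} yields an isomorphism of the form $\TR^n_{q-\lambda}(\bZ; \bZ/p) \cong \TR^n_{q - 2 \delta^n_1(\lambda)}(\bZ; \bZ/p)$. The integer-graded calculation then gives length $n$ when $q - 2\delta^n_1(\lambda) \equiv 0$ or $-1$ modulo $2 p^n$, i.e.\ when $q \equiv 2 \delta^n_1(\lambda)$ or $2\delta^n_1(\lambda) - 1$ modulo $2p^n$, and length $n-1$ otherwise, matching the claim.

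For Case (3), I would use connectivity: the bottom cell of $S^\lambda$ as a $C_{p^{n-1}}$-spectrum is $S^{\lambda^{C_{p^{n-1}}}}$, which has real dimension $2 d_{n-1}(\lambda)$, so the fixed-point spectrum $(S^\lambda \sma T(\bZ) \sma V(0))^{C_{p^{n-1}}}$ is $(2 d_{n-1}(\lambda) - 1)$-connected and hence $\TR^n_{q - \lambda}(\bZ; \bZ/p)$ vanishes for $q < 2 d_{n-1}(\lambda)$. For Case (2), with $1 \leq s < n$, I would proceed by induction on $s$ using the restriction cofiber sequence
\[
(S^\lambda \sma T(\bZ) \sma V(0))_{h C_{p^{n-1}}} \to (S^\lambda \sma T(\bZ) \sma V(0))^{C_{p^{n-1}}} \xrightarrow{R} (S^{\lambda'} \sma T(\bZ) \sma V(0))^{C_{p^{n-2}}}.
\]
The homotopy-orbit term is $(2 d_0(\lambda) - 1)$-connected as a non-equivariant spectrum, so $R$ induces an isomorphism $\TR^n_{q - \lambda}(\bZ; \bZ/p) \cong \TR^{n-1}_{q - \lambda'}(\bZ; \bZ/p)$ whenever $q < 2 d_0(\lambda)$. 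Iterating $s$ times, and using $d_k(\lambda^{(j)}) = d_{k+j}(\lambda)$ so that the vanishing threshold at the $j$-th stage is $2 d_j(\lambda)$, produces $\TR^n_{q - \lambda}(\bZ; \bZ/p) \cong \TR^{n-s}_{q - \lambda^{(s)}}(\bZ; \bZ/p)$ whenever $q < 2 d_{s-1}(\lambda)$. Since the hypothesis $q \geq 2 d_s(\lambda) = 2 d_0(\lambda^{(s)})$ puts the right-hand side in its own stable range, Case (1) applied with $n$ replaced by $n - s$ and $\lambda$ by $\lambda^{(s)}$ finishes the argument.

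The main obstacle is Case (2): one must verify carefully that connectivity of the homotopy-orbit fiber yields $R$-isomorphisms in exactly the claimed $q$-range (covering both $\pi_q$ and $\pi_{q-1}$ of the fiber in the long exact sequence) and then track how $\delta^n_1(\lambda)$ becomes $\delta^{n-s}_1(\lambda^{(s)})$ under the induction. A secondary subtlety appears at $p = 2$, where $V(0)$ is not a ring spectrum; there one must instead use its module structure over the mod $4$ Moore spectrum $S/4$ as in Rognes \cite{Ro99b} to justify invoking the integer-graded computation.
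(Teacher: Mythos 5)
Your overall plan is sound and, for Cases (2) and (3), coincides with the paper's argument: the paper also establishes Parts (2) and (3) by observing that the norm cofiber sequence gives an isomorphism $R : \TR^n_{q-\lambda}(\bZ;\bZ/p) \to \TR^{n-1}_{q-\lambda'}(\bZ;\bZ/p)$ for $q < 2d_0(\lambda)$, and iterating. Your connectivity justification of that range and of Part (3) is correct, and you rightly flag the need to check both $\pi_q$ and $\pi_{q-1}$ of the homotopy-orbit fiber.

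Where you genuinely diverge from the paper is Case (1). The paper does \emph{not} simply cite Theorem \ref{thm:TateSS}; instead it runs the homotopy-orbit-to-$\TR$ spectral sequence of Section \ref{s:hotoTRSSVc-1}, identifies the $E^\infty$-term summands explicitly, and obtains the length by a combinatorial count of the $v_1$-towers crossing a given degree, then handles the range $2d_0(\lambda)\le q$ below the point where the small spectral sequences have degenerated by a separate periodicity/Tsalidis argument. Your route --- quoting Theorem \ref{thm:TateSS} and reducing to the integer-graded computation of B\"okstedt--Madsen/Rognes --- is shorter and entirely legitimate, provided two points are made precise. First, Theorem \ref{thm:TateSS} only asserts the isomorphism ``for $*$ sufficiently large''; you need to unwind its proof (via Theorem \ref{t:Tsalidis} with $i=0$ for $c=1$, and the fact that $d_i(\lambda)$ is non-increasing for an actual $\lambda$) to see that $q\ge 2d_0(\lambda)$ already lies in the stable range and that $q - 2\delta_1^n(-\lambda)\ge 0$ there, so that the target $\TR^n$-group is also in its stable range.

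Second, and more seriously, there is a sign slip in your application of Theorem \ref{thm:TateSS}. Setting $\alpha=-\lambda$, the theorem gives $\TR^n_{q-\lambda}\cong \TR^n_{q-2\delta^n_1(-\lambda)}$, and since $\delta^n_1(-\lambda)=-\delta^n_1(\lambda)$ this is $\TR^n_{q+2\delta^n_1(\lambda)}$, not $\TR^n_{q-2\delta^n_1(\lambda)}$ as you write. Following the signs carefully leads to the congruence condition $q\equiv -2\delta^n_1(\lambda)$ or $-2\delta^n_1(\lambda)-1$ modulo $2p^n$, which is what the paper's proof actually produces (e.g.\ the bottom $v_1$-tower starts in degree $-2\delta_1^n(\lambda)$, as one can check directly by taking $\lambda=\bC(p^{n-1})$, for which $\delta^n_1(\lambda)=-1$ while $\TR^n_{q-\lambda}\cong\TR^n_{q-2}$). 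So your conclusion matches the statement of Theorem \ref{thm:mainL} only by means of a compensating sign error; you should verify the sign independently rather than read it off of the statement being proved.
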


\noindent
At an odd prime $p$, $\TR^n_\alpha(\bZ;\bZ/p)$ is automatically a $\bZ/p$-vector space. It follows a posteriori that when $p=2$, $\TR^n_{q-\lambda}(\bZ;\bZ/2)$ is a $\bZ/2$-vector space; see \cite[Corollary 2.7]{AnGeHe}.

\subsection{Organization} \label{ss:organization}
We begin in \S \ref{Intro2} by recalling the fundamental diagram of TR-theory, which will be essential to the computations throughout the paper.  In \S \ref{s:hotoTRSS} we set up a spectral sequence from the homotopy groups of a homotopy orbit spectrum to the TR-groups we are aiming to compute. In \S \ref{s:TateSSc-1} we study the Tate spectral sequence in the $RO(S^1)$-graded setting, which is essential to understanding the homotopy orbit spectrum which serves as input for our computations.  We handle the cases of $\mathbb{F}_p$, $\mathbb{Z}$, and $\ell$ simultaneously. We find in Theorem \ref{thm:TateSS} below that in each case the Tate spectral sequence is a shifted version of the corresponding $\bZ$-graded spectral sequence. In \S \ref{s:hoorbVc-1} we study the effect of truncating the Tate spectral sequence to obtain spectral sequences converging to the homotopy orbits and the homotopy fixed points. This provides the induction step needed to prove Theorem \ref{thm:TateSS} from the previous section. In \S \ref{s:hotoTRSSVc-1} we describe the homotopy orbit to $\TR$ spectral sequence from \S \ref{s:hotoTRSS} in our examples for a general virtual representation $\alpha$. In \S \ref{s:TRFp} we consider the case $A=\bF_p$ and use the homotopy orbit to $\TR$ spectral sequence with $\bZ/p^l$ coefficients for all $l \geq 1$ to give an algorithm for computing $\TR^{n+1}_{\alpha+*}(\bF_p)$ for any virtual representation $\alpha$. In \S \ref{s:TRq-lambda} we specialize to representations of the form $-\lambda$, where $\lambda$ is an actual $S^1$-representation. We show that in this case the homotopy orbit to $\TR$ spectral sequence simplifies, and prove Theorem \ref{thm:mainL}.

\section{The fundamental diagram}\label{Intro2}

The $\TR$-groups are connected by several operators: $R$, $F$, $V$ and $d$. In the ordinary (integer-graded) case, there are maps as follows (see \cite{HeMa05} for more details). Inclusion of fixed points induces a map
$$ F : \TR^{n+1}_q(A) \to \TR^n_q(A) $$
called the Frobenius. This map has an associated transfer,
$$V : \TR^n_q(A) \to \TR^{n+1}_q(A), $$
the Verschiebung. The differential
$$ d : \TR^n_q(A) \to \TR^n_{q+1}(A) $$
is given by multiplying with the fundamental class of $S^1/C_{p^{n-1}}$ using the circle action. Topological Hochschild homology is a cyclotomic spectrum \cite{HeMa97}, which gives a map
\[ R : \TR^{n+1}_q(A) \to \TR^n_q(A) \]
called the restriction. The identification of the target of the restriction map with $\TR^n(A)$ uses this cyclotomic structure of $T(A)$, which identifies the geometric fixed points $T(A)^{gC_p}$ with $T(A)$. To make this identification we need to change universes, because the $S^1$ acting on $T(A)$ is not the same as the $S^1$ acting on $T(A)^{gC_p}$. As a special case, consider $T(G)$ for $G$ a topological group. Then $T(G) \simeq \Sigma^\infty Map(S^1, BG)_+$ is the suspension spectrum of the free loop space on $BG$. The geometric fixed points are then given by $T(G)^{gC_p} \simeq \Sigma^\infty Map(S^1/C_p,BG)_+$, the free loop space on loops parametrized by $S^1/C_p$.

The primary approach used to compute TR-groups is to compare the fixed point spectra to the homotopy fixed point spectra. Let $E$ denote a free contractible $S^1$-CW complex. Recall that the homotopy fixed point spectrum is defined by $T(A)^{hC_{p^n}} := F(E_+, T(A))^{C_{p^n}}$, and the TR-spectrum is defined by $\TR^{n+1}(A) := T(A)^{C_{p^n}}$. The map $E_+ \rightarrow S^0$ given by projection onto the non-basepoint induces a map
$$
\Gamma_n: \TR^{n+1}(A)  \rightarrow T(A)^{hC_{p^n}}.
$$
The general strategy for computing the homotopy groups $\TR^{n+1}_q(A)$ is to compute $\pi_q( T(A)^{hC_{p^n}})$ and the map $\Gamma_n$. This is facilitated through the use of a fundamental diagram of horizontal cofiber sequences, see \cite[\S 1-2]{BoMa} or \cite[Equation 25]{HeMa97}:
\begin{equation} \label{eq:NRD}
\xymatrix{ T(A)_{hC_{p^n}} \ar[r]^-N \ar[d]^= & \TR^{n+1}(A) \ar[r]^-R \ar[d]^{\Gamma_n} & \TR^n(A) \ar[r]^-\partial \ar[d]^{\hat{\Gamma}_n} & \Sigma T(A)_{hC_{p^n}} \ar[d]^= \\
T(A)_{hC_{p^n}} \ar[r]^-{N^h} & T(A)^{hC_{p^n}} \ar[r]^{R^h} & T(A)^{tC_{p^n}} \ar[r]^-\partial & \Sigma T(A)_{hC_{p^n}} }
\end{equation}
Let $\tilde{E}$ denote the cofiber of $E_+ \rightarrow S^0$. Then $T(A)_{hC_{p^n}} :=   (E_+ \wedge T(A))^{C_{p^{n}}}$ is the homotopy orbit spectrum and $T(A)^{tC_{p^n}} := (\tilde{E} \wedge F(E_+, T(A))^{C_{p^{n}}}$ is the Tate spectrum, see \cite{GrMa95}. A theorem of Tsalidis \cite[Theorem 2.4]{Ts98} characterizes situations when this map $\Gamma_n$ is an isomorphism.

The computation of $RO(S^1)$-graded TR-groups can be approached similarly. As before, we have the Frobenius $ F : \TR^{n+1}_\alpha(A) \to \TR^n_\alpha(A) $,  the Verschiebung $V : \TR^n_\alpha(A) \to \TR^{n+1}_\alpha(A)$, the differential $ d : \TR^n_\alpha(A) \to \TR^n_{\alpha+1}(A) $, and the restriction $R : \TR^{n+1}_\alpha(A) \to \TR^n_{\alpha'}(A).$ Note that the target of $R$ is the group in dimension $\alpha'$, not $\alpha$ (see \cite{HeMa97} for a detailed explanation of the restriction in this context).

The fundamental diagram also extends to this $RO(S^1)$-graded context. Let $T$ denote $T(A)$ and let $T[-\alpha]=T(A) \wedge S^{-\alpha}$ denote the desuspension of $T$ by $\alpha$. Then we have the following fundamental diagram of horizontal cofiber sequences, see \cite[Equation 49]{HeMa97}:

\begin{equation} \label{eq:NRD_equiv}
\xymatrix{ T[-\alpha]_{hC_{p^n}} \ar[r]^-N \ar[d]^= & \TR^{n+1}(A)[-\alpha] \ar[r]^-R \ar[d]^{\Gamma_n} & \TR^n(A)[-\alpha'] \ar[r]^-\partial \ar[d]^{\hat{\Gamma}_n} & \Sigma T[-\alpha]_{hC_{p^n}} \ar[d]^= \\
T[-\alpha]_{hC_{p^n}} \ar[r]^-{N^h} & T[-\alpha]^{hC_{p^n}} \ar[r]^{R^h} & T[-\alpha]^{tC_{p^n}} \ar[r]^-\partial & \Sigma T[-\alpha]_{hC_{p^n}} }
\end{equation}
Notice that $\TR^n(A)[-\alpha']$ appears, rather than $\TR^n(A)[-\alpha]$. We can take homotopy groups of the top row and get a long exact sequence
\begin{equation} \label{eq:fundLES}
\ldots \to \pi_q T[-\alpha]_{hC_{p^n}} \to \TR^{n+1}_{\alpha+q}(A) \to \TR^n_{\alpha'+q} (A)\to \pi_{q-1} T[-\alpha]_{hC_{p^n}} \to \ldots
\end{equation}
This is the fundamental long exact sequence of $RO(S^1)$-graded TR-theory. The strategy for computing $\TR^n_{\alpha+*}(A)$ is to use Diagram \ref{eq:NRD_equiv} and induction. One can attempt to understand the bottom row via spectral sequences, see \cite{GrMa95} and \cite[Equation 26]{HeMa97}. In this case the spectral sequences look as follows:

\begin{alignat*}{5}
\hat{E}^2_{s,t}(\alpha) & = \hat{H}^{-s}(C_{p^n}, V_t(T[-\alpha])) & \Rightarrow V_{s+t}( T[-\alpha]^{tC_{p^n}}) \\
E^2_{s,t}(\alpha) & = H^{-s}(C_{p^n}, V_t(T[-\alpha])) & \Rightarrow V_{s+t}( T[-\alpha]^{hC_{p^n}}) \\
E^2_{s,t}(\alpha) & = H_s(C_{p^n}, V_t(T[-\alpha])) & \Rightarrow V_{s+t}( T[-\alpha]_{hC_{p^n}})
\end{alignat*}

Note that in general we have $\hat{H}^k(C_{p^n}, M) \cong H^k(C_{p^n}, M)$ for $k > 0$ and $\hat{H}^k(C_{p^n}, M) \cong H_{-(k+1)}(C_{p^n}, M)$ for $k<-1$, and that when $M=\bZ/p$ we have $\hat{H}^0(C_{p^n}, M) \cong H^0(C_{p^n}, M)$ and $\hat{H}^{-1}(C_{p^n}, M) \cong H_0(C_{p^n}, M)$. This means that the restriction of the Tate spectral sequence to the first quadrant, meaning filtration $\geq 1$, gives the homotopy orbit spectral sequence with the filtration shifted by $1$. This corresponds to the connecting homomorphism $T[-\alpha]^{tC_{p^n}} \to \Sigma T[-\alpha]_{hC_{p^n}}$ in Diagram \ref{eq:NRD_equiv} above. Similarly, the restriction of the Tate spectral sequence to the second quadrant, meaning filtration $\leq 0$, gives the homotopy fixed point spectral sequence.

We use these spectral sequences to make computations of the homotopy groups on the bottom of the diagram. Understanding the maps $\Gamma_n$ and $\hat{\Gamma}_n$ is also key to our arguments.  Theorem \ref{t:Tsalidis} below, which is due to Tsalidis \cite{Ts98} in the non-equivariant case, says that if $\hat{\Gamma}_1$ is an isomorphism in sufficiently high degrees then so are $\Gamma_n$ and $\hat{\Gamma}_n$ for all $n$. If we know $\TR^n_{\alpha'+*}$ we can then use $\hat{\Gamma}_n$ to understand the Tate spectrum $T[-\alpha]^{tC_{p^n}}$ and the rest of the bottom row. This gives $\TR^{n+1}_{\alpha+*}$ in sufficiently high degrees. We are then left to compute $\TR^{n+1}_{\alpha+*}$ in the unstable range. In the following section we develop a spectral sequence that allows us to do the computations in the unstable range. This spectral sequence starts with the homotopy groups of various homotopy orbit spectra and converges to the $\TR$-groups we would like to compute. The spectral sequence allows us to treat the cases of $\mathbb{F}_p$, $\mathbb{Z}$, and $\ell$ simultaneously. However, in the case of $\mathbb{F}_p$ there are additional extension issues which need to be resolved.

In the $\bZ$-graded case, it is useful to first compute $\TR^n_*(\bF_p;\bZ/p)$. This shows that $\TR^n_{2q}(\bF_p)$ is cyclic and $\TR^n_{2q+1}(\bF_p)=0$, and from this we conclude that the relevant extensions are maximally nontrivial. In the $RO(S^1)$-graded case, $\TR^n_{\alpha+q}(\bF_p)$ could have several summands, and indeed, for many $\alpha$ it does. It is possible to compute the order of  $\TR^n_{\alpha+q}(\bF_p)$ inductively using Diagram \ref{eq:NRD_equiv}, and computations with $\mathbb{Z}/p$-coefficients determine the number of summands, but this information is not enough to determine the group. We solve this problem by using $\mathbb{Z}/p^l$ coefficients for all $l \geq 1$, calculating the associated graded of $\TR^n_{\alpha+q}(\bF_p;\bZ/p^l)$, and this is enough to solve the extension problem.

No such extension problems arise in our computations of $\TR^n_{\alpha+*}(\bZ;V(0))$ and $\TR^n_{\alpha+*}(\ell;V(1))$  as graded abelian groups. However, it is convenient to consider these not only as graded abelian groups but as modules over $\bF_p[v_1]$ using the map $v_1 : \Sigma^{2p-2} V(0) \to V(0)$ in the first case and over $\bF_p[v_2]$ using the map $v_2 : \Sigma^{2p^2-2} V(1) \to V(1)$ in the second case. This simplifies the bookkeeping, and by writing $\bZ/p^n$ as $\bF_p[v_0]/v_0^n$ we can treat all three cases simultaneously. In the stable range the module structure over $\bF_p[v_c]$ is clear, but there could be hidden $v_c$-multiplications in low degree. One could then consider using $S(p,v_1^l)$ or $S/(p,v_1,v_2^l)$ as coefficients, and although we believe this would give a similar algorithm for resolving the extensions as the one we find for $\TR^n_{\alpha+*}(\bF_p)$ we will not pursue that avenue here. We will express $\TR^n_{\alpha+*}(\bZ;V(0))$ as an $\bF_p[v_1]$-module and $\TR^n_{\alpha+*}(\ell;V(1))$ as an $\bF_p[v_2]$-module, with the caveat that there might be additional hidden extensions.

At $p=2$ there is no map $v_1 : \Sigma^2 V(0) \to V(0)$, so it does not make sense to express $\TR^n_{\alpha+*}(\bZ, V(0))$ as a $\bF_2[v_1]$-module. So when we write down $\TR^n_{\alpha+*}(\bZ; V(0))$ the result should be interpreted additively, or as a module over $\bF_2[v_1^4]$ using the map $v_1^4 : \Sigma^8 V(0) \to V(0)$, when $p=2$.

\section{The homotopy orbit to $\TR$ spectral sequence} \label{s:hotoTRSS}
It is possible to glue together the long exact sequences in Equation \ref{eq:fundLES} to obtain a spectral sequence converging to $\TR^{n+1}_{\alpha+*}(A;V)$ with coefficients in $V$. For this section $A$ can be any connective $S$-algebra and $V$ can be any spectrum. Let $T=T(A)$. The $E^1$ term is given by
\[ E^1_{s,t}(\alpha)=\begin{cases} V_t T[-\alpha^{(n-s)}]_{hC_{p^s}} & \text{for $0 \leq s \leq n$,} \\ 0 & \text{otherwise.} \end{cases} \]
This spectral sequence converges to $\TR^{n+1}_{\alpha+t}(A;V)$. Note that we use a slightly non-standard grading convention here; we find it more convenient not to deal with $V_{t-s} T[-\alpha^{(n-s)}]_{hC_{p^s}}$.

The reason this spectral sequence has not been introduced before is that in previously computed examples, one can understand $\TR^{n+1}_*(A;V)$ completely by comparing with $V_* T^{hC_{p^n}}$. In the $RO(S^1)$-graded case, there is a range of degrees where this comparison is less useful.

The $d_r$ differential has bidegree $(r,-1)$,
\[ d_r : E^r_{s,t}(\alpha) \to E^r_{s+r,t-1}(\alpha),\]
and can be defined as follows: For $x \in V_t T[-\alpha^{(n-s)}]_{hC_{p^s}}$, $d_r(x)$ is given by lifting $N(x)$ up to $\TR^{s+r}_{\alpha^{(n-s-r+1)}+t}(A;V)$ and then applying $\partial$:
\[ \xymatrix{ & \TR^{s+r}_{\alpha^{(n-s-r+1)}+t}(A;V) \ar[r]^-\partial \ar[d]^R & V_{t-1} T[-\alpha^{(n-s-r)}]_{hC_{p^{s+r}}} \\
 & \vdots \ar[d]^R & \\
 & \TR^{s+2}_{\alpha^{(n-s-1)}+t}(A;V) \ar[d]^-R \ar[r]^-\partial & V_{t-1} T[-\alpha^{(n-s-2)}]_{hC_{p^{s+2}}} \\
V_t T[-\alpha^{(n-s)}]_{hC_{p^s}} \ar[r]^-N & \TR^{s+1}_{\alpha^{(n-s)}+t}(A;V) \ar[r]^-\partial & V_{t-1} T[-\alpha^{(n-s-1)}]_{hC_{p^{s+1}}}
} \]

\begin{observation}
We note that if $A$ and $V$ are $(-1)$-connected the filtration $s$ piece $V_* T[-\alpha^{(n-s)}]_{hC_{p^s}}$ is zero in degree $*<-2d_{n-s}(\alpha)$.
\end{observation}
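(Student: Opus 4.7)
The plan is a direct connectivity argument. Write $d := d_{n-s}(\alpha) = \dim_{\bC} \alpha^{(n-s)}$. The underlying non-equivariant spectrum of the representation sphere $S^{-\alpha^{(n-s)}}$ is stably the integer sphere $S^{-2d}$, which is $(-2d-1)$-connected; this holds whether or not $\alpha^{(n-s)}$ is a genuine (as opposed to virtual) representation, because real dimension is additive under sums of $S^1$-representations.

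First I would bound the underlying non-equivariant connectivity of $T \wedge V \wedge S^{-\alpha^{(n-s)}}$. Since $A$ is $(-1)$-connected, topological Hochschild homology $T = T(A)$ is $(-1)$-connected; smashing with the $(-1)$-connected spectrum $V$ keeps connectivity $(-1)$, and smashing with $S^{-\alpha^{(n-s)}}$ then produces a spectrum whose stable homotopy groups vanish in degrees $< -2d$.

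Second, I would transfer this bound through the $C_{p^s}$-homotopy orbits via the standard first-quadrant spectral sequence
\[ E^2_{i,j} = H_i(C_{p^s}; \pi_j X) \Longrightarrow \pi_{i+j}(X_{hC_{p^s}}), \]
supported in $i \geq 0$. If $\pi_j X = 0$ for $j < -2d$, then any lattice point with $i \geq 0$ and $i+j < -2d$ forces $j < -2d$ and hence $E^2_{i,j} = 0$, so $\pi_n(X_{hC_{p^s}}) = 0$ for $n < -2d$. Applying this with $X = T[-\alpha^{(n-s)}] \wedge V$, and using that $V$ carries trivial $C_{p^s}$-action and so may be pulled outside the homotopy orbits, yields $V_* T[-\alpha^{(n-s)}]_{hC_{p^s}} = 0$ for $* < -2d_{n-s}(\alpha)$, as claimed. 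There is no real obstacle in the argument; one only needs that homotopy orbits preserve the property of being bounded below, which is the content of the spectral sequence above.
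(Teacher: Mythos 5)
Your argument is correct and is the natural one; the paper simply states this as an observation without proof, and the justification you give (underlying spectrum of $S^{-\alpha^{(n-s)}}$ is $S^{-2d_{n-s}(\alpha)}$, connectivity adds over smash products, and the first-quadrant homotopy orbit spectral sequence $E^2_{i,j}=H_i(C_{p^s};\pi_jX)$ preserves the lower bound) is exactly the spectral sequence the paper itself records in \S 2. One small remark on precision: the phrase ``real dimension is additive under sums'' should really invoke that virtual real dimension is well defined and additive on $RO(S^1)$, so that $S^{-\alpha^{(n-s)}}$ has underlying sphere $S^{-2d_{n-s}(\alpha)}$ even when $d_{n-s}(\alpha)$ is negative; but your use of it is correct.
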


\begin{defn} \label{def:Tatepiece}
Consider the short exact sequence
\[ 0 \to coker(R^h)[-1] \to V_* T[-\alpha]_{hC_{p^n}} \to ker(R^h) \to 0 \]
obtained by taking $V_*(-)$ of the bottom row of Diagram \ref{eq:NRD_equiv}. We call the image of $coker(R^h)[-1]$ in $V_* T[-\alpha]_{hC_{p^n}}$ the Tate piece and denote it by $V^t_* T[-\alpha]_{hC_{p^n}}$. If the sequence is split we choose a splitting and call the image of $ker(R^h)$ under the splitting the homotopy fixed point piece, denoted $V^h_* T[-\alpha]_{hC_{p^n}}$.

Hence if the above short exact sequence splits we have a decomposition
\[ V_* T[-\alpha]_{hC_{p^n}} \cong V^t_* T[-\alpha]_{hC_{p^n}} \oplus V^h_* T[-\alpha]_{hC_{p^n}}.\]
\end{defn}

The purpose of the above definition is to get a better handle on the differentials in the homotopy orbit to TR spectral sequence:

\begin{lemma}
In the homotopy orbit to TR spectral sequence, every class in the Tate piece $V^t_* T[-\alpha^{(n-s)}]_{hC_{p^s}}$ is a permanent cycle, and the image of any differential is contained in the Tate piece. If the short exact sequence in Definition \ref{def:Tatepiece} splits then all differentials go from a subgroup of the homotopy fixed point piece to a quotient of the Tate piece.
\end{lemma}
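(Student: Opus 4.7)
The plan is to verify the three assertions in the lemma: the image of any differential lies in the Tate piece; classes in the Tate piece are permanent cycles; and (when the splitting in Definition \ref{def:Tatepiece} exists) differentials effectively go from a subgroup of the homotopy fixed point piece to a quotient of the Tate piece. The key input throughout is the commutativity of the fundamental diagram (\ref{eq:NRD_equiv}).

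First I would prove the image-in-Tate-piece statement by a direct diagram chase. By construction, for $x \in E^r_{s,t}(\alpha)$ that survives to page $r$, we have $d_r(x) = \partial(\tilde y)$, where $\tilde y \in \TR^{s+r}_{\alpha^{(n-s-r+1)}+t}(A;V)$ is a chosen lift of $N(x)$ through $r-1$ restriction maps and $\partial$ is the top-row connecting map of the fundamental diagram at level $s+r+1$. The commutativity of the right-hand square of that diagram gives the factorization $\partial = \partial \circ \hat{\Gamma}_{s+r}$, where the second $\partial$ now denotes the bottom-row connecting map. Hence $d_r(x) = \partial(\hat{\Gamma}_{s+r}(\tilde y))$ lies in the image of the bottom connecting map, which by Definition \ref{def:Tatepiece} is precisely the Tate piece.

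Next I would prove the permanent-cycle statement by constructing, for a given $x = \partial(\tilde z)$ in the Tate piece, an inductive tower of lifts of $N(x)$ through successive restriction maps whose connecting maps all vanish. The base observation is that $N^h(x) = 0$, since $N^h \circ \partial = 0$ as two consecutive arrows in the rotated bottom cofiber sequence at level $s+1$; by the commutativity $\Gamma_s \circ N = N^h$ this forces $\Gamma_s(N(x)) = 0$. Taking fibers vertically in the fundamental diagram yields a cofiber sequence of fibers whose first term is contractible, identifying the fiber of $\Gamma_s$ with the fiber of $\hat{\Gamma}_s$. The inductive step is an analogous argument at level $s+r+1$: the concrete claim to verify is that $\hat{\Gamma}_{s+r}(\tilde y_r)$ lies in the image of $R^h$, which makes the next connecting map vanish and produces the lift $\tilde y_{r+1} \in \TR^{s+r+1}$.

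The splitting statement follows formally from the first two: given $V_* T[-\alpha]_{hC_{p^n}} = V^t_* \oplus V^h_*$, every class decomposes as $x = x^t + x^h$, and since $d_r(x^t) = 0$ by the second part we have $d_r(x) = d_r(x^h)$, which lies in the Tate piece by the first part. The hard part will be the induction in the permanent-cycle argument: while the base case is immediate, propagating the obstruction-free lifting through all higher stages of the restriction tower requires careful bookkeeping with the comparison maps $\hat{\Gamma}_{s+r}$ and the compatibility of the fundamental diagrams across consecutive levels.
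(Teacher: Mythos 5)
Your argument is in the right spirit but has a real gap in the permanent-cycle step.

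The image-in-Tate-piece step is fine: by construction $d_r(x)=\partial^{\mathrm{top}}(\tilde y_r)$ where $\tilde y_r\in\TR^{s+r}_{\alpha^{(n-s-r+1)}+t}$ is an $R$-lift of $N(x)$, and the right square of Diagram (\ref{eq:NRD_equiv}) at the relevant level gives $\partial^{\mathrm{top}}=\partial^{\mathrm{bot}}\circ\hat\Gamma$, so the value lands in $\operatorname{im}(\partial^{\mathrm{bot}})$, which is exactly the Tate piece. The third assertion also follows formally once the first two are in place, as you say.

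The problem is your permanent-cycle argument. The base step $N^h(x)=0$, hence $\Gamma_s(N(x))=0$, is correct, and the identification $\mathrm{fib}(\Gamma_s)\simeq\mathrm{fib}(\hat\Gamma_s)$ is also correct. But what you actually need to conclude $d_1(x)=0$ is that $\hat\Gamma_{s+1}(N(x))\in\operatorname{im}(R^h)$ — and $\hat\Gamma_{s+1}$ and $\Gamma_s$ are \emph{different} maps out of $\TR^{s+1}$, with different targets ($T[-\alpha^{(n-s-1)}]^{tC_{p^{s+1}}}$ versus $T[-\alpha^{(n-s)}]^{hC_{p^s}}$). The same-level fiber identification $\mathrm{fib}(\Gamma_s)\simeq\mathrm{fib}(\hat\Gamma_s)$ relates $\Gamma_s$ to $\hat\Gamma_s$, not to $\hat\Gamma_{s+1}$; there is no natural map $T^{hC_{p^s}}\to T^{tC_{p^{s+1}}}$ through which $\hat\Gamma_{s+1}$ factors, so ``$\Gamma_s(N(x))=0$'' does not by itself force the next connecting map to vanish. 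You acknowledge this as ``the hard part'' requiring ``careful bookkeeping,'' but the bookkeeping you sketch (iterating the fiber identification level by level) does not actually resolve it: at each stage one again faces the mismatch between $\Gamma_{s+r-1}$ (which you can control) and $\hat\Gamma_{s+r}$ (which controls the obstruction). To close the gap you need an argument comparing the two columns of (\ref{eq:NRD_equiv}) at \emph{consecutive} levels — e.g.\ a compatible system of fibers over the $R$-tower, or some other mechanism relating $\ker\Gamma_{s+r-1}$ to $\ker\hat\Gamma_{s+r}$ — and none of that is supplied. The paper itself only asserts a ``straightforward diagram chase,'' so I cannot compare in detail, but as written your inductive step is not a proof.
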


\begin{proof}
This is a straightforward diagram chase, using the construction of the spectral sequence and Diagram \ref{eq:NRD_equiv}.
\end{proof}

We will denote classes in $V^t_*T[-\alpha]_{hC_{p^n}}$ by their name in $V_*T[-\alpha]^{tC_{p^n}}$ and classes in $V^h_*T[-\alpha]_{hC_{p^n}}$ by their name in $V_*T[-\alpha]^{hC_{p^n}}$.

\section{The Tate spectral sequence} \label{s:TateSSc-1}
In order to use the spectral sequence from the previous section, we must first understand the homotopy orbit spectrum. The homotopy orbit spectral sequence computing $V_* T[-\alpha]_{hC_{p^n}}$ is the restriction of the corresponding Tate spectral sequence to positive filtration, so we first need to study the Tate spectral sequence converging to $V_* T[-\alpha]^{tC_{p^n}}$. For $c \in \{0,1,2\}$, let $V$ and $A$ be as in the introduction, and let $T=T(A)$.

Recall \cite{HeMa97, BoMa, AuRo} that the homotopy groups of topological Hochschild homology with these coefficients are given as follows:

\begin{eqnarray*}
\pi_* T(\bF_p) & = & P(\mu_0), \\
V(0)_* T(\bZ) & = & E(\lambda_1) \otimes P(\mu_1), \\
V(1)_* T(\ell) & = & E(\lambda_1,\lambda_2) \otimes P(\mu_2).
\end{eqnarray*}
Here $P(-)$ denotes a polynomial algebra and $E(-)$ denotes an exterior algebra, both over $\bF_p$. The degrees are given by $|\lambda_i|=2p^i-1$ and $|\mu_c|=2p^c$, with $\lambda_i$ represented by $\sigma \bar{\xi}_i$ and $\mu_c$ represented by $\sigma \bar{\tau}_c$ in the B\"okstedt spectral sequence. At $p=2$, $\lambda_i$ is represented by $\sigma \bar{\xi}_i^2$ and $\mu_c$ is represented by $\sigma \bar{\xi}_{c+1}$.

The above formula for $V(0)_* T(\bZ)$ can be interpreted multiplicatively even though $V(0)$ is not a ring spectrum at $p=2$, by using that $V(0) \sma T(\bZ) \simeq T(\bZ;\bZ/2)$, topological Hochschild homology of $\bZ$ with coefficients in the bimodule $\bZ/2$. (A similar trick gives an interpretation of $V(1)_* T(\ell)$ at $p=2$ and $p=3$, but we will not need this.) But note that there is no $S^1$-action on topological Hochschild homology with coefficients in a bimodule, so there is no corresponding ring structure on the $\TR$-groups if the coefficient spectrum is not a ring spectrum. Rognes \cite{Ro99b} has shown that at $p=2$ everything still works, by showing the Tate spectral sequence converging to $V(0)_* T(\bZ)^{tC_{p^n}}$ has a \emph{formal} algebra structure, so we can proceed as if $V(0) \sma T(\bZ)^{tC_{p^n}}$ was a ring spectrum.


We have
\[ V_* T[-\alpha] \cong V_{2d_0(\alpha)+*} T,\]
and we know from \cite[Lemma 9.1]{HeMa97} that the Tate spectrum $T[-\alpha]^{tC_{p^n}}$ only depends on $\alpha'$. With the usual grading conventions the Tate spectral sequence will depend on $\alpha$, and not just on $\alpha'$. In fact, by considering the Tate spectral sequence for some $\beta$ with $\alpha'=\beta'$ the pattern of differentials will change in the following way. If we have a differential
\[ d_{2r}(t^k x) = t^{k+r} y \]
in the spectral sequence converging to $V_* T[-\alpha]^{tC_{p^n}}$ we get a differential
\[ d_r(t^{k-d_0(\beta)+d_0(\alpha)} x) = t^{k-d_0(\beta)+d_0(\alpha)+r} y \]
in the spectral sequence converging to $V_* T[-\beta]^{tC_{p^n}}$.

To get a Tate spectral sequence that only depends on $\alpha'$, we do the following. Write
\begin{equation} \label{eq:ValphaT}
V_* T[-\alpha] = t^{d_0(\alpha)} V_* T
\end{equation}
where $|t|=-2$.

Then the Tate spectral sequence converging to $V_* T[-\alpha]^{tC_{p^n}}$ has $E_2$ term given by
\begin{equation*}
\hat{E}_2(\alpha) = \hat{H}^*(C_{p^n}; V_* T[-\alpha]) \cong V_* T(A) \otimes P(t,t^{-1}) \otimes E(u_n)[-\alpha],
\end{equation*}
a free module over the corresponding non-equivariant spectral sequence on a generator $[-\alpha]$. Here $|u_n|=-1$ and $|t|=-2$ are in negative filtration degree ($s$) and zero fiber degree ($t$), while $V_* T(A)$ is concentrated in filtration degree $0$. With a factor of $t^{d_0(\alpha)}$ coming from $V_* T[-\alpha]$, the Tate spectral sequence now only depends on $\alpha'$ and the $E_2$ term is isomorphic as a bigraded abelian group to the corresponding non-equivariant $E_2$ term. The price we pay is that we have to redefine what we mean by the first and second quadrant of this spectral sequence. Now first quadrant means filtration $\geq -2d_0(\alpha)+1$ and second quadrant means filtration $\leq -2d_0(\alpha)$.

The class $v_c \in \pi_{2p^c-2} V$ (recall that $v_0=p$) maps to a class in $V_* T^{hS^1}$ represented by $t \mu_c$ in the $E_2$ term of the homotopy fixed point spectral sequence (see e.g.\ \cite[Proposition 4.8]{AuRo}), so by abuse of notation we will denote the class $t \mu_c$ in the $C_{p^n}$ Tate spectral sequence by $v_c$.

Recall \cite{HeMa97, BoMa, AuRo} that $V_* T^{tC_{p^n}}$ is $2p^{cn}$-periodic and the definition of $\delta_c^n(\alpha)$ in Equation \ref{def:delta} in the introduction.

\begin{thm} \label{thm:TateSS}
The $RO(S^1)$-graded $\TR$ groups of $A$ satisfy
\[ \TR^n_{\alpha+*}(A;V) \cong \TR^n_{*-2\delta_c^n(\alpha)}(A;V) \]
for $*$ sufficiently large, and the $V$-homotopy groups of $T[-\alpha]^{tC_{p^n}}$ satisfy
\[ V_*T[-\alpha]^{tC_{p^n}} \cong V_{*-2\delta_c^n(\alpha')} T^{tC_{p^n}} \]
for all $*$.
\end{thm}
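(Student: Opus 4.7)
The two statements will be proved simultaneously by induction on $n$. The Tate statement at level $n$ feeds the $\TR$ statement at level $n$ via the fundamental cofiber sequence of Diagram \ref{eq:NRD_equiv}, and the $\TR$ statement at level $n$ in turn feeds the Tate statement at level $n+1$ through the truncation arguments of \S\ref{s:hoorbVc-1}. For the base case $n=1$, $\delta_c^1(\alpha)=-d_0(\alpha)$, so the $\TR$ statement reduces to the trivial identification $\TR^1_{\alpha+*}(A;V)=V_*T[-\alpha]\cong V_{*+2d_0(\alpha)}T$, while the Tate statement is handled by running the $C_p$-Tate spectral sequence with $E_2$-page $V_*T\otimes P(t,t^{-1})\otimes E(u_1)\cdot t^{d_0(\alpha)}$. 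The differentials are the classical ones coming from the Connes operator and are insensitive to the filtration shift $t^{d_0(\alpha)}$, so after passing to $E_\infty$ and exploiting $v_c$-periodicity the net effect of this shift on the abutment is a total-degree translation by $-2\delta_c^1(\alpha')=2d_1(\alpha)$.

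For the inductive step, assume both statements at level $n-1$. For the $\TR$ part at level $n$, I would combine the long exact sequence \eqref{eq:fundLES} with the inductive hypothesis applied to $\TR^{n-1}_{\alpha'+*}(A;V)$, the identification of $V_*T[-\alpha]_{hC_{p^{n-1}}}$ provided by the truncation arguments of \S\ref{s:hoorbVc-1}, and the Tsalidis-type statement (Theorem \ref{t:Tsalidis} below) that $\hat{\Gamma}_{n-1}$ is an equivalence in sufficiently high degrees. A diagram chase in Diagram \ref{eq:NRD_equiv} then produces the claimed shift. The arithmetic matches because one checks directly from Equation \eqref{def:delta} that
\[
\delta_c^n(\alpha)=d_0(\alpha)(p^c-1)+p^c\,\delta_c^{n-1}(\alpha'),
\]
and the extra summand $d_0(\alpha)(p^c-1)$ absorbs the contribution from the new outer Tate layer. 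For the Tate statement at level $n$, I would feed the just-established $\TR$ isomorphism at level $n$ back through the truncation results of \S\ref{s:hoorbVc-1} to reassemble the homotopy orbit and homotopy fixed point spectral sequences into the full Tate spectral sequence.

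The main technical obstacle will be the bookkeeping needed to convert the $E_2$-filtration shift $t^{d_0(\alpha)}$ into a total-degree shift in the abutment. Each filtration unit of $t$ contributes differently depending on the depth $k\in\{0,1,\ldots,n-1\}$ at which it is detected by the Tate differentials: the weighting $p^{ck}$ in Equation \eqref{def:delta} is precisely the length of one $v_c$-periodicity cycle at depth $k$, and the coefficient $d_{k-1}(\alpha)-d_k(\alpha)$ counts the number of such cycles in the corresponding block of the spectral sequence. Verifying this combinatorial identification against the actual cascade of differentials is the technical heart of the argument. In the $c=1,2$ cases one must also track hidden $v_c$-extensions, though as noted in the introduction these only affect the $\bF_p[v_c]$-module refinement and not the total-degree shift itself.
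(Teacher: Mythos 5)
Your proposal has the same overall architecture as the paper's proof: a double induction alternating between the Tate and $\TR$ statements, driven by the equivariant Tsalidis-type theorem (Theorem \ref{t:Tsalidis}), and your recurrence $\delta_c^n(\alpha)=d_0(\alpha)(p^c-1)+p^c\delta_c^{n-1}(\alpha')$ is exactly the identity that makes the arithmetic close up. The route you take for the $\TR$ statement --- the long exact sequence \eqref{eq:fundLES} plus a diagram chase --- is more circuitous than necessary: the paper uses the ``equivalently'' clause of Theorem \ref{t:Tsalidis}, namely that $\Gamma_{n}$ is coconnective, so $\TR^{n+1}_{\alpha+*}(A;V)\cong V_* T[-\alpha]^{hC_{p^n}}$ in the stable range with no diagram chase required; but that is a matter of economy, not correctness.

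There are two genuine gaps. First, in the base case you claim the $C_p$-Tate differentials ``are the classical ones coming from the Connes operator and are insensitive to the filtration shift $t^{d_0(\alpha)}$.'' This cannot be reconciled with your (correct) conclusion of a degree translation by $2d_1(\alpha)$: the differentials do depend on $\alpha'$ --- with the $t^{d_0(\alpha)}$ normalization they are the classical differentials shifted by $\delta_c^1(\alpha')=-d_1(\alpha)$ in the $t$-power --- and it is this shift in the differentials, not the regrading of $E_2$, that produces the abutment translation. Second, and more seriously, your plan for the Tate statement (``feed the $\TR$ isomorphism back through the truncation results to reassemble the full Tate spectral sequence'') does not explain how the isomorphism $V_*T[-\alpha]^{tC_{p^n}}\cong V_{*-2\delta_c^n(\alpha')}T^{tC_{p^n}}$ is obtained for \emph{all} $*$, which is what the theorem asserts. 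One cannot reassemble the Tate spectral sequence from its two truncations, since the cross-quadrant differentials are visible in neither one. The paper's mechanism is that coconnectivity of $\hat{\Gamma}_n$ gives the isomorphism in the stable range, and then the module structure of $V_*T[-\alpha]^{tC_{p^n}}$ over the $2p^{cn}$-periodic ring $V_*T^{tC_{p^n}}$ propagates it to all degrees; this periodicity argument is the step your outline is missing.
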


We prove this theorem in the next section, after analyzing the restriction of the Tate spectral sequence to the first and second quadrant. The proof goes by induction, using a version of Tsalidis' theorem (Theorem \ref{t:Tsalidis}). The point is that knowing $\TR^n_{\alpha'+*}(A;V)$ in the stable range tells us about the behavior of the Tate spectral sequence converging to $V_{\alpha+*} T(A)^{tC_{p^n}}$, which by restriction to the second quadrant tell us about $V_{\alpha+*} T(A)^{hC_{p^n}}$ and hence about $\TR^{n+1}_\alpha(A; V)$.

We spell out the behavior of the Tate spectral sequence in each case. The proof of Theorem \ref{thm:TateSS}, as well as the following formulas, are proved after Theorem \ref{t:Tsalidis} in the next section. Define $r(n)$ by
\begin{equation} \label{def:r(n)}
r(n) = \sum_{1 \leq k \leq n} p^{ck}.
\end{equation}
As in the non-equivariant case the classes $\lambda_i$ and $v_c$ are permanent cycles, and the Tate spectral sequence is determined by the following (compare \cite{HeMa97, BoMa, AuRo}):

In each case we have a family of differentials given by
\[ d_{2r(n)+1}(t^{-k} u_n[-\alpha])=v_c^{r(n-1)+1} t^{p^{cn}-k} [-\alpha] \]
if $\nu_p(k-\delta_c^n(\alpha')) \geq cn$. If $c=0$ this condition is empty, and this is the only family of differentials.

For $c \geq 1$ we have, for each $1 \leq j \leq n$, a differential
\[ d_{2r(j)}(t^{-k}[-\alpha])=v_c^{r(j-1)} t^{p^{cj}-k} \lambda_c [-\alpha] \]
if $\nu_p(k-\delta_c^n(\alpha'))=cj-1$.

Finally, if $c=2$ we have, for each $1 \leq j \leq n$, a differential
\[ d_{2r(j)/p}(t^{-k}[-\alpha])=v_2^{r(j-1)/p} t^{p^{2j-1}-k} \lambda_1 [-\alpha] \]
if $\nu_p(k-\delta_2^n(\alpha'))=2j-2$.

\section{The homotopy orbit and homotopy fixed point spectra} \label{s:hoorbVc-1}
To find $V_*T[-\alpha]_{hC_{p^n}}$ and $V_*T[-\alpha]^{hC_{p^n}}$ we restrict the Tate spectral sequence from the previous section to the first or second quadrant. Recall that because of our grading conventions, in particular Equation \ref{eq:ValphaT} above, the first quadrant means filtration greater than $-2d_0(\alpha)$. Hence the homotopy orbit spectral sequence has $E_2$-term
\[ V_* T \otimes E(u_n)\{t^k [-\alpha] \, : \, k < d_0(\alpha) \}[-1] \]
and the homotopy fixed point spectral sequence has $E_2$-term
\[ V_* T \otimes E(u_n)\{t^k[-\alpha] \, : \, k \geq d_0(\alpha) \}.\]

Analyzing these spectral sequences is straightforward, but requires some amount of bookkeeping. We will write down $V_*T[-\alpha]_{hC_{p^n}}$ completely because it is the input to the homotopy orbit to TR spectral sequence. We will partially describe $V_*T[-\alpha]^{hC_{p^n}}$ by explaining how some $v_c$-towers in the homotopy fixed point piece of $V_*T[-\alpha]_{hC_{p^n}}$ become divisible by some power of $v_c$ in $V_*T[-\alpha]^{hC_{p^n}}$. The rest of $V_* T[-\alpha]^{hC_{p^n}}$ consists of those $v_c$-towers that are concentrated in negative total degree, and these are isomorphic to the corresponding $v_c$-towers in $V_* T[-\alpha]^{tC_{p^n}}$.

We separate $V_*T[-\alpha]_{hC_{p^n}}$ into the Tate piece and the homotopy fixed point piece as in Definition \ref{def:Tatepiece}, and each piece comes in $c+1$ families, each of which can be split into a stable part and an unstable part. In sufficiently high degrees the map $R^h$ in Diagram \ref{eq:NRD_equiv} is zero, so $N^h$ is an isomorphism between the homotopy fixed point piece of $V_*T[-\alpha]_{hC_{p^n}}$ and $V_*T[-\alpha]^{hC_{p^n}}$ in the stable range. This isomorphism can be described in terms of those differentials in the Tate spectral sequence which go from the first to the second quadrant. Such a differential leaves one class in $V_*T[-\alpha]_{hC_{p^n}}$ and one class in $V_*T[-\alpha]^{hC_{p^n}}$, neither of which has a corresponding class in $V_*T[-\alpha]^{tC_{p^n}}$.

To describe the first family, which is the one ``created'' by the longest differential $d_{2r(n)+1}$ in the Tate spectral sequence, let $E=\bF_p$ for $c=0$, $E(\lambda_1)$ for $c=1$ and $E(\lambda_1,\lambda_2)$ for $c=2$. Then the Tate piece of the first family splits as the following direct sum:
\begin{eqnarray*}
& \displaystyle \bigoplus_{\stackrel{k \geq r(n-1)+1-d_0(\alpha)}{\nu_p(k-\delta_c^n(\alpha')) \geq cn}} & E \otimes P_{r(n-1)+1}(v_c)\{t^{-k}[-\alpha]\}[-1] \\
& \displaystyle \bigoplus_{\stackrel{1 \leq k+d_0(\alpha) \leq r(n-1)}{\nu_p(k-\delta_c^n(\alpha')) \geq cn}} & E \otimes  P_{k+d_0(\alpha)}(v_c)\{t^{-k}[-\alpha]\}[-1]
\end{eqnarray*}
In particular, in the stable range we have $v_c$-towers of height $r(n-1)+1$ starting in degree
\[ 2\delta_c^n(\alpha')+mp^{cn}.\]

Similarly, the homotopy fixed point piece splits as a direct sum as follows:
\begin{eqnarray*}
& \displaystyle \bigoplus_{\stackrel{k \geq r(n-1)+1}{\nu_p(k-d_0(\alpha)-\delta_c^n(\alpha')) \geq cn}} & E \otimes P_{r(n)+1}(v_c)\{t^{d_0(\alpha)} \mu_c^k[-\alpha]\} \\
& \displaystyle \bigoplus_{\stackrel{1 \leq k \leq r(n)}{\nu_p(k-d_0(\alpha)-\delta_c^n(\alpha')) \geq cn}} & E \otimes P_k(v_c)\{v_c^{r(n)+1-k} t^{d_0(\alpha)} \mu_c^{k-p^{cn}}[-\alpha]\}
\end{eqnarray*}
In particular, in the stable range we have $v_c$-towers of height $r(n)+1$ starting in degree
\[ -2d_0(\alpha)+2p^c(d_0(\alpha)+\delta_c^n(\alpha')+mp^{cn}) = 2\delta_c^{n+1}(\alpha)+mp^{c(n+1)}.\]

Next we compare this to $V_* T[-\alpha]^{hC_{p^n}}$. For the $v_c$-towers of maximal height, the map $N^h$ in Equation \ref{eq:NRD_equiv} is an isomorphism. Now consider a generator $x$ of
\[ P_k(v_c)\{v_c^{r(n)+1-k} t^{d_0(\alpha)} \mu_c^{k-p^{cn}} [-\alpha]\} = P_k(v_c)\{t^{r(n)+1-k+d_0(\alpha)} \mu_c^{r(n-1)+1}[-\alpha]\} \]
and its image $N^h(x)$ in $V_*T[-\alpha]^{hC_{p^n}}$. We have two cases, with the first case only applicable if $c \geq 1$. First, if $k<p^{cn}$ then $N^h(x)$ is divisible by $v_c^{r(n-1)+1}$ and we get a $v_c$-tower
\[ E \otimes P_{r(n-1)+1+k}(v_c)\{t^{p^{cn}-k+d_0(\alpha)} [-\alpha]\}.\]
If $k \geq p^{cn}$ then $N^h(x)$ is divisible by $v_c^{r(n)+1-k}$ and we get a $v_c$-tower
\[ E \otimes P_{r(n)+1}(v_c)\{t^{d_0(\alpha)} \mu_c^{k-p^{cn}}[-\alpha]\}.\]

If $c \geq 1$ the second family is ``created'' by the differentials $d_{2r(j)}$ for $1 \leq j \leq n$. Let $E'_n=E(u_n)$ if $c=1$ and $E(\lambda_1,u_n)$ if $c=2$. Then the Tate piece of the second family splits as the following direct sum:
\begin{eqnarray*}
& \displaystyle \bigoplus_{2 \leq j \leq n} \bigoplus_{\stackrel{k \geq r(j-1)-d_0(\alpha)}{\nu_p(k-\delta_c^n(\alpha')) = cj-1}} & E_n' \otimes P_{r(j-1)}(v_c)\{t^{-k} \lambda_c [-\alpha]\}[-1] \\
& \displaystyle \bigoplus_{2 \leq j \leq n} \bigoplus_{\stackrel{1\leq k+d_0(\alpha) \leq r(j-1)-1}{\nu_p(k-\delta_c^n(\alpha')) = cj-1}} & E_n' \otimes P_{k+d_0(\alpha)}(v_c)\{t^{-k} \lambda_c [-\alpha]\}[-1]
\end{eqnarray*}
Similarly, the homotopy fixed point piece splits as a direct sum as follows:
\begin{eqnarray*}
& \displaystyle \bigoplus_{1 \leq j \leq n} \bigoplus_{\stackrel{k \geq r(j-1)}{\nu_p(k-d_0(\alpha)-\delta_c^n(\alpha')) = cj-1}} & E_n' \otimes P_{r(j)}(v_c)\{t^{d_0(\alpha)} \mu_c^k \lambda_c [-\alpha]\} \\
& \displaystyle \bigoplus_{1 \leq j \leq n} \bigoplus_{\stackrel{1 \leq k \leq r(j)-1}{\nu_p(k-d_0(\alpha)-\delta_c^n(\alpha')) = cj-1}} & E_n' \otimes P_k(v_c)\{v_c^{r(j)-k} t^{d_0(\alpha)} \mu_c^{k-p^{cj}} \lambda_c [-\alpha]\}
\end{eqnarray*}

Consider a generator $x$ of $P_k(v_c)\{v_c^{r(j)-k} t^{d_0(\alpha)} \mu_c^{k-p^{cj}} \lambda_c [-\alpha]\}$ and its image $N^h(x)$ in $V_*T[-\alpha]^{hC_{p^n}}$. Again we have two cases. If $k<p^{cj}$ then $N^h(x)$ is divisible by $v_c^{r(j-1)}$ and we get a $v_c$-tower
\[ E_n' \otimes P_{r(j-1)+k}(v_c)\{t^{p^{cj}-k+d_0(\alpha)} \lambda_c [-\alpha]\}. \]
If $k \geq p^{cj}$ then $N^h(x)$is divisible by $v_c^{r(j)-k}$ and we get a $v_c$-tower
\[ E_n' \otimes P_{r(j)}(v_c)\{t^{d_0(\alpha)} \mu_c^{k-p^{cj}} \lambda_c [-\alpha]\}.\]

Finally, if $c=2$ the third family is ``created'' by the differentials $d_{2r(j)/p}$ for $1 \leq j \leq n$. Let $E''_n=E(\lambda_2,u_n)$. Then the Tate piece of the third family splits as the following direct sum:
\begin{eqnarray*}
& \displaystyle \bigoplus_{2 \leq j \leq n} \bigoplus_{\stackrel{k \geq r(j-1)/p-d_0(\alpha)}{\nu_p(k-\delta_2^n(\alpha')) = 2j-2}} & E_n'' \otimes P_{r(j-1)/p}(v_2)\{t^{-k} \lambda_1 [-\alpha]\}[-1] \\
& \displaystyle \bigoplus_{2 \leq j \leq n} \bigoplus_{\stackrel{1 \leq k+d_0(\alpha) \leq r(j-1)/p-1}{\nu_p(k-\delta_2^n(\alpha')) = 2j-2}} & E_n'' \otimes P_{k+d_0(\alpha)}(v_2)\{t^{-k} \lambda_1 [-\alpha]\}[-1]
\end{eqnarray*}
Similarly, the homotopy fixed point piece splits as a direct sum as follows:
\begin{eqnarray*}
& \displaystyle \bigoplus_{1 \leq j \leq n} \bigoplus_{\stackrel{k \geq r(j-1)/p}{\nu_p(k-d_0(\alpha)-\delta_2^n(\alpha')) = 2j-2}} & E_n'' \otimes P_{r(j)/p}(v_2)\{t^{d_0(\alpha)} \mu_2^k \lambda_1 [-\alpha]\} \\
& \displaystyle \bigoplus_{1 \leq j \leq n} \bigoplus_{\stackrel{1 \leq k \leq r(j)/p-1}{\nu_p(k-d_0(\alpha)-\delta_2^n(\alpha')) = 2j-2}} & E_n'' \otimes P_k(v_2)\{v_2^{r(j)/p-k} t^{d_0(\alpha)} \mu_2^{k-p^{2j-1}} \lambda_1 [-\alpha]\}
\end{eqnarray*}

Once again, consider the image $N^h(x)$ of a generator $x$ of the $v_2$-tower $P_k(v_2)\{v_2^{r(j)/p-k} t^{d_0(\alpha)} \mu_2^{k-p^{2j-1}} \lambda_1 [-\alpha] \}$ in $V_*T[-\alpha]^{hC_{p^n}}$. If $k<p^{2j-1}$ then $N^h(x)$ is divisible by $v_2^{r(j-1)/p}$ and we get a $v_2$-tower
\[ E_n'' \otimes P_{r(j-1)/p+k}(v_2) \{ t^{p^{2j-1}-k+d_0(\alpha)} \lambda_1 [-\alpha] \}.\]
If $k \geq p^{2j-1}$ then $N^h(x)$ is divisible by $v_2^{r(j)/p-k}$ and we get a $v_2$-tower
\[ P_{r(j)/p}(v_2)\{t^{d_0(\alpha)} \mu_2^{k-p^{2j-1}} \lambda_1 [-\alpha] \}.\]

We will use the following theorem, which with integral coefficients is due to Tsalidis \cite[Theorem 2.4]{Ts98} in the $\bZ$-graded case and Hesselholt-Madsen \cite[Addendum 9.1]{HeMa97} in a special case of the $RO(S^1)$-graded case:
\begin{thm} \label{t:Tsalidis}
Let $A$ be a connective ring spectrum of finite type. Suppose the map $\hat{\Gamma}_1 : T(A) \to T(A)^{tC_p}$ induces an isomorphism $\pi_q (T(A); V) \to \pi_q (T(A)^{tC_p}; V)$ for $q \geq i$. Then, for any $n \geq 1$, $\hat{\Gamma}_n$ induces an isomorphism $\TR^n_{\alpha'+q}(A;V) \to V_qT[-\alpha]^{tC_{p^n}}$ for
\[ q \geq 2\max(-d_1(\alpha),\ldots,-d_n(\alpha))+i.\]
Equivalently, $\Gamma_n$ induces an isomorphism $\TR^{n+1}_{\alpha+q}(A;V) \to \pi_q(T[-\alpha]^{hC_{p^n}}; V)$ in the same range.
\end{thm}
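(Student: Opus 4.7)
The plan is to argue by induction on $n$, adapting Tsalidis's $\bZ$-graded argument while tracking how the representation grading shifts under the cyclotomic structure.

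For the base case $n=1$, I use the Tate formula $(T \sma S^{-\alpha})^{tC_p} \simeq T^{tC_p} \sma S^{-\alpha^{C_p}}$, which follows from the fact that the ``free'' part of $\alpha|_{C_p}$ is invertible after $(-)^{tC_p}$. Since $\dim_\bR \alpha^{C_p} = 2d_1(\alpha) = \dim_\bR \alpha'$, both $\TR^1_{\alpha'+q}(A;V)$ and $V_q T[-\alpha]^{tC_p}$ identify with $2d_1(\alpha)$-fold suspensions of $V_\bullet T$ and $V_\bullet T^{tC_p}$ respectively, and the hypothesis on $\hat{\Gamma}_1 : T \to T^{tC_p}$ directly gives the iso range $q \geq -2d_1(\alpha)+i = 2\max(-d_1(\alpha))+i$.

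For the inductive step, I apply the theorem at level $n-1$ to the representation $\alpha'$. Using $d_k(\alpha') = d_{k+1}(\alpha)$, this yields that $\hat{\Gamma}_{n-1}$ (for $\alpha'$) is an iso on $V_q$ for $q \geq 2\max(-d_2(\alpha),\ldots,-d_n(\alpha))+i$. A five-lemma on Diagram \ref{eq:NRD_equiv} at level $n-1$ with $\alpha'$ in place of $\alpha$ then promotes this to an iso for $\Gamma_{n-1} : \TR^n_{\alpha'+q}(A;V) \to V_q T[-\alpha']^{hC_{p^{n-1}}}$ in the same range. To transfer this to an iso for $\hat{\Gamma}_n$ (for $\alpha$) at level $n$, I invoke the cyclotomic structure: the equivalence $T(A)^{gC_p} \simeq T(A)$ combined with the bounded-below iterated Tate decomposition $T[-\alpha]^{tC_{p^n}} \simeq (T[-\alpha]^{tC_p})^{hC_{p^{n-1}}}$ lets me rewrite the right-hand target as a homotopy fixed point spectrum of $T[-\alpha']$ up to a $2d_1(\alpha)$-fold shift. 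Under this identification $\hat{\Gamma}_n$ becomes a shift of $\Gamma_{n-1}$ applied to $\alpha'$. Combining the inductive range with the base-case range $q \geq -2d_1(\alpha)+i$ coming from the level-$1$ identification $T[-\alpha]^{tC_p} \simeq T^{tC_p} \sma S^{-\alpha^{C_p}}$ for $\alpha$ itself then produces the claimed $q \geq 2\max(-d_1(\alpha),\ldots,-d_n(\alpha))+i$. The equivalent statement for $\Gamma_n$ follows by one more five-lemma on Diagram \ref{eq:NRD_equiv} at level $n$.

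The main obstacle, and where the $RO(S^1)$-graded statement genuinely goes beyond the $\bZ$-graded version, is carefully executing the change of $S^1$-universe inherent in the cyclotomic equivalence $T^{gC_p} \simeq T$: the isomorphism $\rho_p : S^1 \to S^1/C_p$ converts an $S^1$-representation into an $S^1/C_p$-representation, which is precisely the mechanism shifting $\alpha$ to $\alpha'$ on the targets of $R$ and $\hat{\Gamma}_n$. Tracking this universe change at the spectrum level, rather than only on homotopy groups, is what closes the induction and forces the $\max$ over all $-d_k(\alpha)$ in the connectivity range rather than just a single term. I would handle it using the change-of-universe functors for genuine $S^1$-spectra set up in \cite{HeMa97}.
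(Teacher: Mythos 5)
The paper's own proof of this theorem is a one-line citation to \cite[Addendum 9.1]{HeMa97}, so you are necessarily reconstructing the argument rather than reproducing the paper's text. Your reconstruction has the right skeleton, and it matches the Tsalidis/Hesselholt--Madsen strategy: reduce to level $1$ by observing that the Tate construction kills the free summand of $\alpha|_{C_p}$, so $T[-\alpha]^{tC_p}\simeq T^{tC_p}\sma S^{-\alpha^{C_p}}$, and then close the induction by factoring $\hat{\Gamma}_n$ (for $\alpha$) as $\Gamma_{n-1}$ (for $\alpha'$) followed by a map induced by the level-$1$ comparison $\hat{\Gamma}_1$. The bookkeeping $d_k(\alpha')=d_{k+1}(\alpha)$, the $-2d_1(\alpha)+i$ shift in the base case, and the way the $\max$ over $-d_1(\alpha),\ldots,-d_n(\alpha)$ arises by combining the inductive range with the level-$1$ range are all correct. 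The five-lemma passage between $\hat{\Gamma}_{n-1}$ and $\Gamma_{n-1}$ on Diagram \ref{eq:NRD_equiv} preserves the range in the direction you use it, because the outer vertical maps on $T[-\alpha']_{hC_{p^{n-1}}}$ are identities.

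Two points deserve sharper treatment. First, the step $T[-\alpha]^{tC_{p^n}}\simeq(T[-\alpha]^{tC_p})^{hC_{p^{n-1}}}$ is not a formality: it is essentially the Tate orbit lemma for bounded-below spectra. It does apply here, since $T(A)$ is connective, $S^{-\alpha}$ is a finite spectrum, and $V$ is finite, so $V\sma T[-\alpha]$ is bounded below, but you should name the result you are invoking. Hesselholt--Madsen establish the corresponding comparison via the Tate spectral sequence rather than quoting the decomposition wholesale; either route works, but neither is free. Second, you assert that under this identification ``$\hat{\Gamma}_n$ becomes a shift of $\Gamma_{n-1}$ applied to $\alpha'$.'' This compatibility of the cyclotomic structure map with the iterated Tate construction is precisely where the change-of-universe subtlety you raise at the end actually bites, and it requires checking a commutative diagram of spectra, not just of homotopy groups. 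That diagram is the content of \cite[Addendum 9.1]{HeMa97}, so you should either cite it or spell it out rather than leaving it as a remark.
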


\begin{proof}
The proof in \cite[Addendum 9.1]{HeMa97} goes through verbatim with $T(\bF_p)$ replaced by $V(c) \sma T(A)$.
\end{proof}

\begin{proof}[Proof of Theorem \ref{thm:TateSS}]
In each case Theorem \ref{t:Tsalidis} applies, see e.g.\ \cite[Proposition 5.3]{HeMa97} for $\bF_p$, \cite[Lemma 6.5]{BoMa} for $\bZ$, and \cite[Theorem 5.5]{AuRo} for $\ell$. For $c=0$ we have $i=0$, for $c=1$ we have $i=0$ and for $c=2$ we have $i=2p-1$ (the class $t^{p^2} \lambda_1 \lambda_2$ in $V(1)_* T(\ell)^{tC_p}$ is in degree $2p-2$). Suppose by induction that the statement of the Theorem holds for $\TR^n_{\alpha'+*}(A;V)$. Then the map $\hat{\Gamma}_n : \TR^n_{\alpha'+*}(A;V) \to V_*T[-\alpha]^{tC_{p^n}}$ is coconnective, so $V_*T[-\alpha]^{tC_{p^n}}$ is shifted by $2\delta_c^n(\alpha')$ degrees in the stable range. Using that $V_*T[-\alpha]^{tC_{p^n}}$ is a module over $V_* T^{tC_{p^n}}$ and that $V_* T^{tC_{p^n}}$ is $2p^n$-periodic the statement for $V_*T[-\alpha]^{tC_{p^n}}$ follows. The pattern of differentials in the Tate spectral sequence described after the statement of Theorem \ref{thm:TateSS} also follows from this.

Restricting the Tate spectral sequence to the second quadrant gives a spectral sequence computing $V_*T[-\alpha]^{hC_{p^n}}$, and each differential on a class $t^{-k}$ in the Tate spectral sequence gives a class
\[ t^{d_0(\alpha)} \mu_c^{k+d_0(\alpha)}.\]
The differentials on $t^{-k}$ for various $k$ are shifted by $2\delta_c^n(\alpha')$ degrees, which means that the classes in the homotopy fixed point spectrum are shifted by
\[ -2d_0(\alpha)+2p^c(d_0(\alpha)+\delta_c^n(\alpha')) = 2\delta_c^{n+1}(\alpha) \]
degrees. Using that $\Gamma_n : \TR^{n+1}_{\alpha+*}(A;V) \to V_*T[-\alpha]^{hC_{p^n}}$ is coconnective the statement then holds for $\TR^{n+1}_{\alpha+*}(A;V)$.
\end{proof}

\section{A splitting of the homotopy orbit to TR spectral sequence} \label{s:hotoTRSSVc-1}
In this section we describe the homotopy orbit to TR spectral sequence in the three cases of interest. We show that the spectral sequence splits as the direct sum of ``small'' spectral sequences, with no differentials between different summands.

We first describe the small spectral sequences. Consider the following diagram:

\small{
\[ \xymatrix{
& P_{r(0)+1}(v_c)\{t^{d_n(\alpha)}\mu_c^k\} \ar@{-}[d] \ar@{.>}[ld] \ar@{.>}[ldd] \ar@{.>}[ldddd] \\
P_{r(0)+1}(v_c)\{t^{-p^c k-\delta_c^1(\alpha^{(n)})}\}[-1] \ar@{-}[d] & \!\!\!\!\! P_{r(1)+1}(v_c)\{t^{d_{n-1}(\alpha)} \mu_c^{p^c k+d_{n-1}(\alpha)+\delta_c^1(\alpha^{(n)})}\} \ar@{-}[d] \ar@{.>}[ld] \ar@{.>}[lddd] \\
P_{r(1)+1}(v_c)\{t^{-p^{2c} k-\delta_c^2(\alpha^{(n-1)})}\}[-1] \ar@{-}[d] & \!\!\!\!\! P_{r(2)+1}(v_c)\{t^{d_{n-2}(\alpha)}\mu_c^{p^{2c} k+d_{n-2}(\alpha)+\delta_c^2(\alpha^{(n-1)})}\} \ar@{-}[d] \ar@{.>}[ldd] \\
\vdots \ar@{-}[d] & \vdots \ar@{-}[d] \\
P_{r(n-1)+1}(v_c)\{t^{-p^{cn} k-\delta_c^n(\alpha')}\}[-1] & P_{r(n)+1}(v_c)\{t^{d_0(\alpha)}\mu_c^{p^{cn} k+d_0(\alpha)+\delta_c^n(\alpha')}\}
} \]
}

\normalsize

For each $k$, there is a summand of the $E^1$ term of the homotopy orbit to TR spectral sequence which looks like the above diagram tensored with $E$ (recall that $E=\bF_p$, $E(\lambda_1)$ or $E(\lambda_1,\lambda_2)$), with submodules of the modules in the right hand column and quotient modules of the modules in the left hand column (the summands are allowed to be $0$). If $c=0$, this describes the whole $E^1$ term. If $c=1$ there is one more family of diagrams to consider and if $c=2$ there are two more families of diagrams to consider.

For $c=1$ or $2$ the second family of small spectral sequences looks as follows. Recall that $E_j'=E(u_j)$ if $c=1$ and $E(u_j,\lambda_1)$ if $c=2$. For each $0 \leq j \leq n-1$ and each $k$ with $\nu_p(k-d_j(\alpha)+d_{j+1}(\alpha)) = c-1$ we have a corresponding diagram, where the right hand side consists of submodules of
\[ E'_{n-j+m} \otimes P_{r(m+1)} (v_c)\{t^{d_{j-m}(\alpha)}  \mu_c^{p^{cm} k+d_{j-m}(\alpha)+\delta_c^m(\alpha^{(j-m+1)})} \lambda_c \} \]
for $0 \leq m \leq j$ and the left hand side consists of quotient modules of
\[ E'_{n-j+m} \otimes P_{r(m)}(v_c)\{t^{-p^{cm} k-\delta_c^m(\alpha^{(j-m+1)})} \lambda_c \}[-1] \]
for $1 \leq m \leq j$.

Finally, if $c=2$ the third family of small spectral sequences looks as follows. Recall that $E_j''=E(u_j,\lambda_2)$. For each $1 \leq j \leq n$ and each $k$ with $\nu_p(k-d_j(\alpha)+d_{j+1}(\alpha))=0$ we have a corresponding diagram, where the right hand side consists of submodules of
\[ E''_{n-j+m} \otimes P_{r(m+1)/p}(v_2)\{t^{d_{j-m}(\alpha)} \mu_2^{p^{2m} k+d_{j-m}(\alpha)+\delta_2^m(\alpha^{(j-m+1)})} \lambda_1 \} \]
for $0 \leq m \leq j$ and the left hand side consists of quotient modules of
\[ E''_{n-j+m} \otimes P_{r(m)/p}(v_2)\{t^{-p^{2m} k-\delta_2^m(\alpha^{(j-m+1)})} \lambda_1 \}[-1] \]
for $1 \leq m \leq j$.

The following theorem gives an algorithm for computing the homotopy orbit to $\TR$ spectral sequence. The expression for $d_\rho(x)$ looks unpleasant, but for $c=1$ or $2$ the formula, in the case when $d_\rho(x)$ is nontrivial, can be obtained simply from degree considerations.

\begin{thm} \label{thm:hotoTRSS}
The homotopy orbit to $\TR$ spectral sequence
\[ E^1_{s,t}(\alpha) = V_* T[-\alpha^{(n-s)}]_{hC_{p^s}} \Longrightarrow \TR^{n+1}_{\alpha+*}(A;V) \]
splits as a direct sum of the above spectral sequences, with no differentials between summands.

The differentials are determined by the following data. Let $e_j = u_j^{\epsilon_0} \lambda_1^{\epsilon_1} \lambda_2^{\epsilon 2}$ and suppose
\[ x = v_c^i t^{d_j(\alpha)} e_{n-j} \mu_c^k[-\alpha^{(j)}] \]
is a nontrivial class in the homotopy fixed point piece of $E^1_{n-j,*}$. Let
\[ y_h = v_c^{i^{(h)}} t^{-p^{hc}k-\delta_c^h(\alpha^{(j-h+1)})} e_{n-j+h}[-\alpha^{(j-h)}], \]
where
\[ i^{(h)} = i-r(h-1)k-\sum_{0 \leq k \leq h-2} \big[ d_{j-h+1}(\alpha)-d_j(\alpha) \big] p^{ck}.\]
If $x$ survives to $E^\rho_{n-j,*}$ and the classes $y_h \in V_* T[-\alpha^{(j-h)}]^{tC_{p^{n-j+h}}}$ are nonzero for $1 \leq h \leq \rho$ then $d_\rho(x)=\partial^h(y_\rho)$ considered as a class in $E^\rho_{n-j+\rho,*}$. If at least one of the classes $y_h$ for $1 \leq h \leq \rho$ is zero then $d_\rho(x)=0$.
\end{thm}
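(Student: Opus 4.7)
The plan is to prove the theorem in three stages: establish the splitting, rule out cross-summand differentials, and identify the differentials within each small spectral sequence with the Tate-spectrum data $y_h$.

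For the splitting, I would combine the explicit decomposition of $V_*T[-\alpha^{(n-s)}]_{hC_{p^s}}$ into Tate pieces and homotopy-fixed-point pieces from Section~\ref{s:hoorbVc-1} with the shift by $2\delta_c^m(\alpha^{(n-m+1)})$ predicted by Theorem~\ref{thm:TateSS}. For each triple (family, $j$, $k$), the homotopy-fixed-point summand in filtration $s$ pairs with the Tate summand in filtration $s+1$ through a common periodic generator $t^{-k}$ or $\mu_c^k$ of the underlying Tate spectrum; assembling these across $s$ reconstructs the diagrams in the statement. The absence of cross-summand differentials then follows from the module structure of the homotopy orbit to $\TR$ spectral sequence over $\TR^n_*(A;V)$ (extended formally to $c=1$, $p=2$ via Rognes~\cite{Ro99b}): differentials are $v_c$-linear and $\lambda_i$-linear, so each small spectral sequence is a single orbit under these actions, with distinct small spectral sequences corresponding to algebraically independent generators in the Tate picture.

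For the differential formula, I would unwind the construction of $d_\rho$ from Section~\ref{s:hotoTRSS}: $d_\rho(x)=\partial(z_\rho)$, where $z_\rho \in \TR^{n-j+\rho}_{\alpha^{(j-\rho+1)}+t}(A;V)$ is a lift of $N(x)$ through $\rho-1$ successive applications of $R$. Iterating the commutative square $\hat\Gamma \circ R = R^h \circ \Gamma$ from Diagram~\ref{eq:NRD_equiv}, each partial lift $z_h$ maps under $\hat\Gamma_{n-j+h}$ to a class in $V_*T[-\alpha^{(j-h)}]^{tC_{p^{n-j+h}}}$ which, by the shift theorem~\ref{thm:TateSS} applied at each level together with the Tate differentials described there, is exactly $y_h$. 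The existence of the $(h+1)$-st lift is therefore equivalent to $y_h$ lying in the image of $R^h$ in the Tate spectral sequence, which can be read off from whether $y_h$ itself is nonzero: when some $y_h$ vanishes, no lift exists past that stage and $d_\rho(x)=0$; when all $y_h$ are nonzero for $h \leq \rho$, applying $\partial$ to $z_\rho$ translates via the right-hand square of~\ref{eq:NRD_equiv} into $\partial(y_\rho)$ in $V_{*-1}T[-\alpha^{(j-\rho)}]_{hC_{p^{n-j+\rho}}}$, as claimed.

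The main obstacle is the bookkeeping required for the recursive $v_c$-exponents $i^{(h)}$. The $t$-power $-p^{hc}k - \delta_c^h(\alpha^{(j-h+1)})$ in $y_h$ falls out of $h$-fold application of the shift theorem~\ref{thm:TateSS}, but extracting the exact formula for $i^{(h)}$ requires carefully tracking how each lift absorbs one $v_c$-power dictated by the height-$r(h)$ truncation of the Tate $v_c$-towers in Section~\ref{s:hoorbVc-1}. Unwinding these truncations at each stage and summing the contributions $r(h-1)k$ together with the difference terms $[d_{j-h+1}(\alpha) - d_j(\alpha)]p^{ck}$ coming from the Tate shift yields the stated recursion.
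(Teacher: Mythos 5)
Your overall plan is aligned with the paper's: unwind the construction of $d_\rho$ via the fundamental diagram, identify the Tate-spectrum data at each level, and keep track of the $v_c$-exponents. But there are two concrete problems with how you carry it out.

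First, your treatment of the case where some $y_h$ vanishes reverses the actual mechanism. You write that when $y_h$ vanishes, ``no lift exists past that stage.'' The opposite holds: if $y_h = 0$ then certainly $\partial^h(y_h) = 0$, so $d_h(x) = 0$ and the lift $x_h$ through $R$ \emph{does} exist. The reason $d_\rho(x)=0$ for all $\rho > h$ is not that the lift stops existing, but that $x_h$ lies in the kernel of $\hat\Gamma_{n-j+h+1}$ --- it becomes invisible to the Tate machinery --- and that this invisibility propagates inductively to all further lifts. The paper's proof handles this explicitly as Case 2, showing that each successive $x_{h'}$ multiplies trivially by $\mu_c^N$ to the stable range, hence has vanishing $\hat\Gamma$ image, hence vanishing $\partial$. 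Your reasoning, taken at face value, would instead imply $x$ is hit by a differential at stage $h$, which contradicts the claim you are trying to prove.

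Second, in the positive case the step ``each partial lift $z_h$ maps under $\hat\Gamma_{n-j+h}$ to exactly $y_h$'' is asserted but not established. The commutative square $\hat\Gamma \circ R = R^h \circ \Gamma$ constrains $\hat\Gamma(R(z_h))$ in terms of $\Gamma(z_h)$, but it does not by itself determine $\hat\Gamma(z_h)$ in terms of earlier data: $z_h$ is only well-defined modulo the image of $N$, and you need an argument for why a particular Tate-spectral-sequence representative is the correct one. The paper's proof supplies this via multiplication by a suitable power $\mu_c^N$, which exists in $\TR^k_*(A;V)$ (with the $S/4$ workaround at $c=1$, $p=2$) and moves any class into the stable range where $\Gamma_n$ and $\hat\Gamma_n$ are isomorphisms by Theorem~\ref{t:Tsalidis}; the canonical choice of representative is then the one obtained by restricting the Tate representative of $\hat\Gamma_{n-j+h}(x_{h-1})$ to the second quadrant. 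Without this $\mu_c^N$-comparison (or some equivalent device for fixing the indeterminacy of the lift), the identification of $\hat\Gamma(z_h)$ with $y_h$ is not justified, and the distinction between your Case 1 and Case 2 cannot be made precise.

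Your splitting argument via $v_c$- and $\lambda_i$-linearity is a reasonable observation, though the paper does not argue that way; once the differential formula is established for every generator $x$, the fact that $d_\rho(x)$ lands in the same small spectral sequence as $x$ is immediate from the formula for $y_\rho$, so no separate linearity argument is needed.
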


\begin{proof}
If we are not in the case $c=1$, $p=2$, then $\TR^k_{\beta+*}(A; V)$ is a module over $\TR^k_*(A; V)$, which contains an element $\mu_c^N$ for $N$ a multiple of $p^{c(k-1)}$. In the case $c=1$, $p=2$, $\TR^k_{\beta+*}(\bZ, V(0))$ is a module over $\TR^k_*(\bZ, S/4)$, which contains an element $\mu_1^N$ for $N$ a multiple of $2^k$. This follows by induction, using the results in \cite{Ro99b} and Tsalidis' theorem. In all cases we have a way of comparing with the stable range by multiplying by $\mu_c^N$ for an appropriate $N$.

The class $x$ in $V_* T[-\alpha^{(j)}]_{hC_{p^{n-j}}}$ maps to a class with the same name in $\TR^{n-j+1}_{\alpha^{(j)}+*}(A;V)$. By comparing with the stable range we find that $\hat{\Gamma}_{n-j+1}(x)=y_1$. By construction of the spectral sequence this implies that $d_1(x)=\partial^h(y_1)$.

If $d_1(x)=0$, then $x$ lifts to a class $x_1$ in $\TR^{n-j+2}_{\alpha^{(j-1)}+*}(A;V)$. Let $z_1=\Gamma_{n-j+1}(x_1)$ in $V_* T[-\alpha^{(j-1)}]^{hC_{p^{n-j+1}}}$. While $x_1$, and hence $z_1$, may not be unique, we have a canonical choice for a representative for $z_1$ in the homotopy fixed point spectral sequence given by taking a representative for $\hat{\Gamma}_{n-j+1}(x)$ in the Tate spectral sequence and restricting to the second quadrant. We then have two cases.

Case 1: The class $z_1$ multiplies nontrivially by $\mu_c^N$ to the stable range. Because $V_* T[-\alpha^{(j-1)}]^{hC_{p^{n-j+1}}}$ is isomorphic to $V_* T[-\alpha^{(j-2)}]^{tC_{p^{n-j+2}}}$ in the stable range, this happens exactly when $y_2=\hat{\Gamma}_{n-j+2}(x_1) \neq 0$. Again it follows by construction of the spectral sequence that $d_2(x)=\partial^h(y_2)$. The formula for $d_\rho(x)$ assuming $y_1,\ldots,y_\rho$ are all nonzero follows by induction.

Case 2: The class $z_1$ multiplies trivially by $\mu_c^N$ to the stable range. In this case we find that $\hat{\Gamma}_{n-j+2}(x_2)=0$, so $d_2(x)=0$.  By induction, $x$ lifts to a class $x_h$ in $\TR^{n-j+h+1}_{\alpha^{(j-h)}+*}(A;V)$ which multiplies trivially to the stable range for all $h$. Hence $d_\rho(x)=0$ for all $\rho$. The same argument applies as soon as some $y_h$ is zero.

\end{proof}

\section{The $RO(S^1)$-graded TR--groups of $\bF_p$} \label{s:TRFp}
While Theorem \ref{thm:hotoTRSS} above tells us all the differentials in the spectral sequence converging to $\TR^{n+1}_{\alpha+*}(\bF_p)$, we need some additional information to resolve the extension problems. As shown in \cite{Ge07}, the extension problem is in fact quite delicate.

We observe that if we know the order of $\TR^{n+1}_{\alpha+*}(\bF_p;\bZ/p^l)$ for each $l \geq 1$, we can reconstruct $\TR^{n+1}_{\alpha+*}(\bF_p)$. Let $T=T(\bF_p)$. We find that
\[ \pi_* (T[-\alpha];\bZ/p^l) \cong t^{d_0(\alpha)} E(\beta_l) \otimes P(\mu_0),\]
where $\beta_l$ is in degree $1$, and the Tate spectral sequence behaves as follows:

\begin{lemma}
Consider the spectral sequence converging to $\pi_*(T[-\alpha]^{tC_{p^n}};\bZ/p^l)$. If $n<l$ there is a differential $d_{2n+1}(u_n) = t v_0^n$ and if $n \geq l$ there is a differential $d_{2l}(\beta_l)=v_0^l$.
\end{lemma}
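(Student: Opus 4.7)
Both parts of the lemma will be proved using naturality of the Tate spectral sequence with respect to the cofiber sequence $S \xrightarrow{p^l} S \xrightarrow{\iota} S/p^l \xrightarrow{j} \Sigma S$ (smashed with $T[-\alpha]^{tC_{p^n}}$), together with a size count of the abutment obtained from the resulting long exact sequence. For the case $n < l$: the unit $\iota: S^0 \to S/p^l$ induces a map of Tate spectral sequences under which $u_n$, $t$, $\mu_0$ each map to the same-named class. The $c=0$ case of Theorem \ref{thm:TateSS} gives the $S^0$-coefficient differential $d_{2n+1}(u_n) = tv_0^n$, which transfers by naturality; since both source and target remain nonzero on the $S/p^l$-coefficient $E_2$ page, this yields a nontrivial differential in the target spectral sequence.

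For the case $n \geq l$: I would first compute the abutment. Using the description of $\pi_* T[-\alpha]^{tC_{p^n}}$ as a $\bZ/p^n$-module (the $c=0$ case of Theorem \ref{thm:TateSS}), the hypothesis $n \geq l$ implies that multiplication by $p^l$ has kernel and cokernel both $\bZ/p^l$ in each relevant degree. Hence the long exact sequence of the smashed cofiber sequence makes $\pi_*(T[-\alpha]^{tC_{p^n}}; \bZ/p^l)$ a $\bZ/p^l$-module with one $\bZ/p^l$ summand in each relevant even degree and one in each relevant odd degree. In particular $v_0^l = p^l$ vanishes in the abutment while $v_0^r$ for $r < l$ does not. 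The class $v_0^l = t^l \mu_0^l$ is nonzero on the $E_2$ page, so must die. A direct check shows $v_0^l$ is not in the image of the inherited $d_{2n+1}$-differential when $l < n$, so the killing class must lie in the $\beta_l$-part. A degree count then shows that any differential landing on $v_0^l$ has length $2r$ with source of the form $v_0^{l-r}\beta_l$, corresponding under the derivation property to $d_{2r}(\beta_l) = v_0^r$ for some $1 \leq r \leq l$; the options $r < l$ are ruled out because $v_0^r$ would then be killed, contradicting its survival in the abutment. This leaves only $r = l$, giving $d_{2l}(\beta_l) = c\, v_0^l$ for some $c \in \bF_p^\times$, and the Bockstein identification of $\beta_l$ as the preimage of $1 \in \pi_0 T[-\alpha]^{tC_{p^n}}$ under $j_*$ fixes $c = 1$.

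The main obstacle is the final normalization: the derivation and counting arguments pin down $d_{2l}(\beta_l)$ only up to a nonzero scalar in $\bF_p$, and identifying this scalar as $1$ requires carefully tracking the Bockstein $\beta_l$ through the connecting map $j$, which relates the spectral sequence filtration on $T[-\alpha]^{tC_{p^n}} \wedge S/p^l$ to the Bockstein long exact sequence and pins down the sign/scalar.
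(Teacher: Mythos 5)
The paper's proof of this lemma is a single sentence (``the mod $p^l$ Bockstein $\beta_l$ will always kill the representative for $p^l$ if possible''); your attempt is a genuine effort to spell out that terse argument via naturality in the Moore spectrum and a size count of the abutment. Most of it is sound, but there is a real gap.

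Your case $n<l$ is essentially correct, with one caveat you should make explicit: to transfer $d_{2n+1}(u_n)=tv_0^n$ along $\iota_*$ you need $u_n$ and $tv_0^n$ to survive to the $E_{2n+1}$-page of the $S/p^l$-coefficient spectral sequence, not merely to be nonzero on $E_2$. This is easy to check (any earlier differential with target $u_n$ would come from negative fiber degree; any earlier differential $d_{2r}(\beta_l)=v_0^r$ with $r\leq n<l$ is ruled out because $v_0^r=p^r$ survives in the abutment $\bZ/p^n$), but it is a step you have omitted.

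The genuine gap is the edge case $l=n$. You explicitly hedge with ``when $l<n$,'' and indeed your exclusion of the $d_{2n+1}$-differential fails when $l=n$: the class $u_n t^{-1}$ is a legitimate element of the $E_2$-page, and by the Leibniz rule $d_{2n+1}(u_n t^{-1})=t^{-1}\cdot tv_0^n=v_0^n$. So for $l=n$ the size count alone does not distinguish the lemma's claim ($d_{2n}(\beta_n)=v_0^n$, after which $d_{2n+1}(u_n)$ becomes vacuous) from the alternative ($d_{2n}(\beta_n)=0$, $d_{2n+1}(u_n)=tv_0^n$ nontrivial); both scenarios produce an $E_\infty$-page with exactly $n$ copies of $\bF_p$ in every total degree. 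One way to close the gap is to first establish the claim in the $S^1$-Tate spectral sequence for $\pi_*(T[-\alpha]^{tS^1};\bZ/p^l)$, where the class $u_n$ does not exist and the size count (together with the module structure over the integral $S^1$-Tate spectral sequence) really does force $d_{2l}(\beta_l)=v_0^l$; then transfer to $C_{p^n}$ by naturality of restriction $T^{tS^1}\to T^{tC_{p^n}}$, noting that for $l\leq n$ nothing can kill $\beta_l$ or $v_0^l$ before page $2l$.

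Finally, the normalization worry you raise at the end is not an obstacle. For the purposes of this paper (computing orders, lengths, and the associated graded of $\TR^{n+1}_{\alpha+*}(\bF_p;\bZ/p^l)$) only the nonvanishing of the differential matters; the nonzero scalar is absorbed into the choice of basis for the $\bF_p$-lines and has no bearing on any downstream computation.
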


\begin{proof}
This is clear because the mod $p^l$ Bockstein $\beta_l$ will always kill the representative for $p^l$ if possible.
\end{proof}

We can then record $\pi_*(T[-\alpha]_{hC_{p^n}};\bZ/p^l)$. As before, we split it into the Tate piece and the homotopy fixed point piece. If $n<l$ we find that the Tate piece is
\begin{eqnarray*}
& \displaystyle \bigoplus_{k \geq n-d_0(\alpha)} & E(\beta_l) \otimes P_n(v_0)\{t^{-k}[-\alpha]\}[-1] \\
& \displaystyle \bigoplus_{1 \leq k+d_0(\alpha) \leq n-1} & E(\beta_l) \otimes  P_{k+d_0(\alpha)}(v_0)\{t^{-k}[-\alpha]\}[-1].
\end{eqnarray*}
Similarly, the homotopy fixed point piece is as follows:
\begin{eqnarray*}
& \displaystyle \bigoplus_{k \geq n} & E(\beta_l) \otimes P_{n+1}(v_0)\{t^{d_0(\alpha)} \mu_0^k[-\alpha]\} \\
& \displaystyle \bigoplus_{1 \leq k \leq n} & E(\beta_l) \otimes P_k(v_0)\{v_0^{n+1-k} t^{d_0(\alpha)} \mu_0^{k-1}[-\alpha]\}
\end{eqnarray*}
If $n \geq l$ we find that the Tate piece is
\begin{eqnarray*}
& \displaystyle \bigoplus_{k \geq l-d_0(\alpha)} & E(u_n) \otimes P_l(v_0)\{t^{-k}[-\alpha]\}[-1] \\
& \displaystyle \bigoplus_{1 \leq k+d_0(\alpha) \leq l-1} & E(u_n) \otimes  P_{k+d_0(\alpha)}(v_0)\{t^{-k}[-\alpha]\}[-1].
\end{eqnarray*}
Similarly, the homotopy fixed point piece is as follows:
\begin{eqnarray*}
& \displaystyle \bigoplus_{k \geq l} & E(u_n) \otimes P_l(v_0)\{t^{d_0(\alpha)} \mu_0^k[-\alpha]\} \\
& \displaystyle \bigoplus_{1 \leq k \leq l-1} & E(u_n) \otimes P_k(v_0)\{v_0^{l-k} t^{d_0(\alpha)} \mu_0^k[-\alpha]\}
\end{eqnarray*}

\begin{thm}
Consider the spectral sequence
\[ E^1(\alpha)= \bigoplus_{0 \leq s \leq n} \pi_*(T[-\alpha^{(n-s)}]_{hC_{p^s}};\bZ/p^l) \Longrightarrow \TR^{n+1}_{\alpha+*}(\bF_p;\bZ/p^l). \]
The differentials are determined by the following data. Suppose
\[ x = p^i t^{d_j(\alpha)} u_{n-j}^\epsilon \mu_0^k[-\alpha^{(j)}] \]
is a nontrivial class in the homotopy fixed point piece of $E^1_{n-j,*}$. Then $d_\rho(x)$ is given as in Theorem \ref{thm:hotoTRSS}.

Now suppose
\[ \bar{x} = p^i t^{d_j(\alpha)} \beta_l \mu_0^k[-\alpha^{(j)}] \]
is a nontrivial class in the homotopy fixed point piece of $E^1_{n-j,*}$, and
\[ \bar{y}_h = \begin{cases} p^{i^{(h)}} t^{-k-\delta_0^h(\alpha^{(j-h+1)})} \beta_l[-\alpha^{(j-h)}] & \text{if $n-j+h-l<0,$} \\
p^{i^{(h)}-(n-j+h-l)} t^{-k-1-\delta_0^h(\alpha^{(j-k+1)})} u_{n-j+h}[-\alpha^{(j-h)}] & \text{if $n-j+h-l \geq 0.$} \end{cases} \]
If $\bar{x}$ survives to $E^\rho_{n-j,*}$ and the classes $\bar{y}_h \in \pi_*(T[-\alpha^{(j-h)}]^{tC_{p^{n-j+h}}};\bZ/p^l)$ are nonzero for $1 \leq h \leq \rho$ then $d_\rho(\bar{x})=\partial^h(\bar{y}_\rho)$ considered as a class in $E^\rho_{n-j+\rho,*}$. If at least one of the classes $\bar{y}_h$ is zero then $d_\rho(\bar{x})=0$.
\end{thm}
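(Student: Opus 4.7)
The plan is to extend the argument of Theorem \ref{thm:hotoTRSS} to mod $p^l$ coefficients, proceeding by induction on the length of the differential exactly as in the integer-coefficient case. The structural ingredients used there carry over verbatim: for each $k \geq 1$ the ring $\TR^k_*(\bF_p; \bZ/p^l)$ contains a power $\mu_0^N$ with $N$ a multiple of $p^{k-1}$, deduced from the integer-coefficient statement via the Bockstein long exact sequence, so multiplication by $\mu_0^N$ moves any class into the stable range, where by Theorem \ref{t:Tsalidis} the map $\hat{\Gamma}$ is an isomorphism onto the corresponding Tate homotopy groups. Given this, every step in the proof of Theorem \ref{thm:hotoTRSS} that lifts a class up the restriction tower and reads off successive values of $\hat{\Gamma}$ in the Tate spectrum goes through, provided we understand how these Tate classes are named at each fixed-point level.

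For a class $x = p^i t^{d_j(\alpha)} u_{n-j}^\epsilon \mu_0^k [-\alpha^{(j)}]$ of the first type, no Bockstein is present, and the Lemma above shows that the $\bZ/p^l$-coefficient Tate spectral sequence differs from its integer-coefficient counterpart only in the treatment of $\beta_l$. Hence the identification of successive images under $\hat{\Gamma}$ proceeds verbatim as in Theorem \ref{thm:hotoTRSS}, and $d_\rho(x)$ is given by the same formula.

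The new ingredient is the Bockstein class $\bar{x} = p^i t^{d_j(\alpha)} \beta_l \mu_0^k [-\alpha^{(j)}]$. The key point is the dichotomy in the mod $p^l$ Tate spectral sequence recorded in the lemma: if $n - j + h < l$ then $\beta_l$ is a permanent cycle and $u_{n-j+h}$ is killed by $d_{2(n-j+h)+1}(u_{n-j+h}) = t v_0^{n-j+h}$, whereas if $n - j + h \geq l$ then $\beta_l$ is instead killed by $d_{2l}(\beta_l) = v_0^l$ and $u_{n-j+h}$ is the new surviving generator. In the first regime, comparing with the stable range by multiplying through by $\mu_0^N$ shows that the image of a lift of $\bar{x}$ under the appropriate $\hat{\Gamma}$ map is represented by the $\beta_l$-form of $\bar{y}_h$ in the statement. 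In the second regime, the differential $d_{2l}(\beta_l) = v_0^l$ forces the same image to be re-expressed in terms of $u_{n-j+h}$: the bidegree of this differential accounts for the shift of the $t$-exponent by $-1$, while the relation $v_0 = p$ (acting through $v_0^l$) accounts for the drop in $p$-adic valuation by $n - j + h - l$, with the convention that the class is zero if the exponent of $p$ becomes negative. This yields the $u$-form of $\bar{y}_h$.

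The main obstacle will be verifying the numerology across the transition at $n - j + h = l$. Specifically, one must check that multiplication by $\mu_0^N$ into the stable range, followed by reading off the Tate representative, produces precisely the stated shift $i^{(h)} \mapsto i^{(h)} - (n-j+h-l)$ in the exponent of $p$ and the $t^{-1}$ twist, and that these shifts are compatible with the connecting homomorphism $\partial$ in Diagram \ref{eq:NRD_equiv}. Once this is established, induction on $\rho$ completes the argument identically to Theorem \ref{thm:hotoTRSS}: if $\bar{y}_\rho$ is nonzero then $d_\rho(\bar{x}) = \partial^h(\bar{y}_\rho)$, and if some $\bar{y}_h$ vanishes for $h \leq \rho$ then $\bar{x}$ lifts all the way up the tower and supports no further differentials.
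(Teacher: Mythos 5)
Your overall strategy matches the paper's: reduce to Theorem \ref{thm:hotoTRSS}, invoke the lemma giving the dichotomy between the differentials $d_{2n+1}(u_n)=tv_0^n$ (for $n<l$) and $d_{2l}(\beta_l)=v_0^l$ (for $n\geq l$) in the mod $p^l$ Tate spectral sequence, and induct up the restriction tower using Tsalidis' theorem and multiplication by a suitable power of $\mu_0$. That much is correct, and the treatment of non-Bockstein classes $x$ is fine.

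The gap is precisely where you flag it: you write that ``one must check the numerology across the transition at $n-j+h=l$'' and then do not do so, and the brief mechanism you offer is not quite right. You attribute the drop in $p$-adic valuation to the single differential $d_{2l}(\beta_l)=v_0^l$, but this cannot by itself produce a shift that depends on $h$ and on the filtration jump $n-j$. The correct source, as the paper identifies, is a \emph{count}: among the intermediate homotopy orbit spectral sequences at levels $n-j,\ldots,n-j+h-1$ through which one lifts $\bar{x}$, exactly those at levels $m\geq l$ (there are $n-j+h-l$ of them, since the nontriviality of $\bar{x}$ forces $n-j<l$) have the $d_{2l}(\beta_l)=v_0^l$ differential rather than $d_{2m+1}(u_m)=tv_0^m$. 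At each such level, the successive lift of $\bar{x}$ and the ensuing differential behave as if the original class had been a multiple of $u_{n-j}\mu_0^{k+1}[-\alpha^{(j)}]$ rather than $\beta_l\mu_0^k[-\alpha^{(j)}]$; this exchange loses one power of $p$ per level, giving the factor $p^{-(n-j+h-l)}$, and the degree-$2$ discrepancy between $\beta_l$ and $u_{n-j+h}$ accounts for the shift to $t^{-k-1}$. Without this count, your argument does not actually establish the exponent in $\bar{y}_h$, which is the entire content of the Bockstein part of the theorem.

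Your $t^{-1}$ explanation (a bidegree match against $\beta_l\mapsto u$) is consistent with the paper's, so that part is fine; the missing piece is isolating and counting the levels at which the Bockstein dies and translating that count into the stated $p$-exponent.
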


\begin{proof}
The proof is similar to the proof of Theorem \ref{thm:hotoTRSS}. The extra factor of $p^{-(n-j+h-l)}$ comes from having $n-j+h-l$ homotopy orbit spectral sequences with a differential on $\beta_l$ rather than a differential on some $u_{j+h}$. For each one, the possible differential, and possible successive lift of $\bar{x}$, behaves as if we had started with a multiple of $u_{n-j} \mu_0^{k+1}[-\alpha^{(j)}]$ rather than a multiple of $\beta_l \mu_0^k[-\alpha^{(j)}]$.
\end{proof}

\section{The TR groups in degree $q-\lambda$} \label{s:TRq-lambda}
It is the TR--groups indexed by representations of the form $\alpha = q - \lambda$ that are most applicable to computations of algebraic $K$-theory. See, for example, Hesselholt and Madsen's computation of $K_q(\mathbb{F}_p[x]/(x^m), (x))$ in \cite{He05} and results of the authors and Hesselholt on $K_q(\mathbb{Z}[x]/(x^m), (x))$ in \cite{AnGeHe}.

\begin{proposition}\label{prop:differentials}
Consider the spectral sequence
\[ E^1_{s,t}(-\lambda) = \bigoplus_{0 \leq s \leq n} V_*T[-\lambda^{(n-s)}]_{hC_{p^s}} \Longrightarrow \TR^{n+1}_{*-\lambda}(A;V) \]
for an actual representation $\lambda$. Then every nonzero class in the Tate piece is killed by a differential.
\end{proposition}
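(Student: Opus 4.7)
The Tate piece classes are permanent cycles by the lemma in Section~\ref{s:hotoTRSS}, so to show they are all killed it suffices to realize each nonzero Tate piece class as the image of some differential. My plan is to use Theorem~\ref{thm:hotoTRSS} to exhibit the required differentials explicitly, working one small spectral sequence at a time using the decomposition from Section~\ref{s:hotoTRSSVc-1}; since no differentials cross between distinct small spectral sequences, this reduction is clean.

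Within a single small spectral sequence, I claim the $d_1$-differential provides an isomorphism from the homotopy fixed point $v_c$-tower at row $m-1$ (filtration $s = m-1$) onto the Tate piece $v_c$-tower at row $m$ (filtration $s = m$), for every $m \ge 1$. The two towers have matching length $r(m-1)+1$, so this reduces to showing $d_1$ is nonzero on the base of the source tower and then invoking $v_c$-linearity. Applying Theorem~\ref{thm:hotoTRSS} with $j = n-m+1$ and $\rho = 1$, the base element
\[ x = t^{d_{n-m+1}(\alpha)} \mu_c^{p^{c(m-1)}k + d_{n-m+1}(\alpha) + \delta_c^{m-1}(\alpha^{(n-m+2)})}[-\alpha^{(n-m+1)}] \]
(with appropriate $\lambda_i$ factors in the second and third families) has iterated $\hat{\Gamma}$-image $y_1$ whose $t$-exponent simplifies, by a telescoping identity built from $\delta_c^1(\beta) = -d_0(\beta)$ and the definition of $\delta_c^m$, to $-p^{cm}k - \delta_c^m(\alpha^{(n-m+1)})$ --- exactly the base of the Tate piece $v_c$-tower at row $m$. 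Thus $d_1(x) = \partial(y_1)$ equals the base of the Tate tower.

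It remains to show $\partial(y_1) \ne 0$, i.e.\ that $y_1$ lies in the first quadrant of the Tate spectral sequence computing $V_*T[-\alpha^{(n-m)}]^{tC_{p^m}}$. The first-quadrant condition is $p^{cm}k + \delta_c^m(\alpha^{(n-m+1)}) > d_{n-m}(\lambda)$, which is precisely the condition for the Tate piece class $z$ at row $m$ to be nonzero. The hypothesis $\alpha = -\lambda$ with $\lambda$ an actual representation, so that $d_i(\alpha) = -d_i(\lambda)$ is non-decreasing, is essential here: it guarantees that the source class $x$ lies in the homotopy fixed point piece (not the Tate piece) at row $m-1$, because the $t$-exponent $d_{n-m+1}(\alpha) = -d_{n-m+1}(\lambda)$ is $\le -d_{n-m}(\lambda)$, placing $x$ in the second quadrant of the corresponding Tate spectral sequence.

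The main obstacle is the case analysis across the three families ($c = 0, 1, 2$) and, within each, the stable versus unstable portions. The stable portion is handled uniformly by the argument above. In the unstable portions the source $v_c$-towers are truncated because they are quotients rather than full polynomial algebras; here the same $d_1$-calculation works, but one must verify that the truncation lengths match between the source in row $m-1$ and target in row $m$, which follows from the explicit formulas for the unstable cases recorded in Section~\ref{s:hoorbVc-1}.
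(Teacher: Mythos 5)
Your proposal rests on the single claim that the $d_1$-differential always maps the homotopy fixed point tower at row $m-1$ isomorphically onto the Tate piece tower at row $m$. This is correct in the stable range (where both towers have length $r(m-1)+1$ and the source base is $t^{d_{n-m+1}(\alpha)}\mu_c^e$ with no $v_c$-factor), and it is essentially the paper's Case~1. But it fails in the unstable range, and that failure is not a bookkeeping issue about "truncation lengths matching" --- it is a genuine obstruction. Looking at \S\ref{s:hoorbVc-1}, the unstable hofi tower at row $m-1$ is $P_k(v_c)\{v_c^{r(m-1)+1-k}\,t^{d_0(\alpha^{(n-m+1)})}\mu_c^{k-p^{c(m-1)}}\}$, so its \emph{base} already carries a positive $v_c$-power $v_c^{r(m-1)+1-k}$. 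Since $d_1$ is $v_c$-linear, the image of this base is $v_c^{r(m-1)+1-k}$ times a Tate class; it can never reach the bottom of the target Tate tower, which is a truncated $v_c$-tower of length at most $r(m-1)+1-k$ or comparable. Thus for certain Tate classes $z$ no $d_1$ hits the generator, yet $z$ is nonzero. (One can arrange $e$ small, $m\ge 3$, $c=0$, $d_{n-m}(\lambda)=d_{n-m+1}(\lambda)$ for a concrete failure.)

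This is exactly the situation the paper labels as Case~2: the candidate source $x$ has $\mu_c$-exponent $\le r(n-2)$, the corresponding Tate class $y$ lands in filtration $2d_1(\lambda)$ which is \emph{outside} the first quadrant, so $\partial^h(y)=0$ and $d_1(x)=0$ even though $z\ne 0$. The paper's resolution is qualitatively different from anything in your proposal: it passes to an auxiliary "almost actual" representation $\mu$ with $\mu''=\lambda''$ and $d_1(\mu)=d_1(\lambda)-1$. In the spectral sequence $E^*(-\mu)$ the class $\partial^h(y)$ becomes a \emph{nonzero} Tate class (the first-quadrant boundary has shifted), and by induction (with the extended hypothesis $d_i\ge d_{i+1}$ for $i\ge1$, $d_0\ge d_1-1$) it is killed by some $d_\rho(w)$; Theorem~\ref{thm:hotoTRSS} then converts this into a \emph{longer} differential $d_{\rho+1}(w)=z$ in $E^{*}(-\lambda)$. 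No longer differentials, no auxiliary representation, and no induction hypothesis appear anywhere in your proposal, so the unstable portion of the argument --- which you defer as "one must verify that the truncation lengths match" --- is in fact the crux of the proof and cannot be recovered by a $d_1$-only argument.

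Concretely: your final paragraph asserts that "the same $d_1$-calculation works" in the unstable portion after checking lengths. It does not. You should replace that paragraph with the paper's Case~2 mechanism (auxiliary $\mu$, induction on the extended class of "almost actual" representations, and the translation of $d_\rho$ in $E^*(-\mu)$ to $d_{\rho+1}$ in $E^*(-\lambda)$ via Theorem~\ref{thm:hotoTRSS}), since without it the proposition is unproved precisely where the Tate towers are truncated.
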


\begin{proof}
We prove this by induction, but with a slightly extended induction hypothesis. We consider a representation $\lambda$ which is \emph{almost} an actual representation, by which we mean that $d_i(\lambda) \geq d_{i+1}(\lambda)$ for $i \geq 1$ and $d_0(\lambda) \geq d_1(\lambda)-1$.

Consider the first family of spectral sequences described in \S \ref{s:hotoTRSSVc-1}. It is enough to show that
\[ z = t^{-p^{cn} k+\delta_c^n(\lambda')}[-1] \]
in the Tate piece of $E^1_{n,*}(-\lambda)$ is hit by a differential. For $z$ to be nonzero we must have
\[ p^{cn} k-d_0(\lambda)-\delta_c^n(\lambda') \geq 0.\]
Consider
\[ x = t^{-d_1(\lambda)} \mu_c^{p^{c(n-1)}k - d_1(\lambda) - \delta_c^{n-1}(\lambda'')} \]
in the homotopy orbit piece of $E^1_{n-1,*}$. If
\[ p^{c(n-1)}k-d_1(\lambda)-\delta_c^{n-1}(\lambda'') > r(n-2) \]
then $x$ is nonzero and $d_1(x)=z$.

Now suppose
\[ p^{c(n-1)}k-d_1(\lambda)-\delta_c^{n-1}(\lambda'') \leq r(n-2).\]
Consider the class
\[ y = v_c^{p^{c(n-1)}k-d_1(\lambda)-\delta_c^{n-1}(\lambda'')} t^{-p^{c(n-1)}k+\delta_c^{n-1}(\lambda'')} \]
in the Tate spectral sequence converging to $V_* T[-\lambda']^{tC_{p^{n-1}}}$. Then $y$ is in filtration $2d_1(\lambda)$, which means that $y$ is not in the first quadrant of the spectral sequence and hence $\partial^h(y)=0$. Note that
\[ 0 \leq p^{c(n-1)}k-d_1(\lambda)-\delta_c^{n-1}(\lambda'') \leq r(n-2),\]
so $y$ is nonzero in $V_* T[-\lambda']^{tC_{p^{n-1}}}$.

By assumption, $d_1(\lambda) \geq d_2(\lambda)$. Then we can consider a representation $\mu$ with $\mu''=\lambda''$ and $d_1(\mu)=d_1(\lambda)-1$. Then $\partial^h(y) \neq 0$ in $E^1_{n-1,*}(-\mu)$. By induction $\partial^h(y)=d_\rho(w)$ for some $w$ in $E^\rho_{*,*}(-\mu)$. But then Theorem \ref{thm:hotoTRSS} implies that $d_{\rho+1}(w)=z$ in $E^{\rho+1}_{*,*}(-\lambda)$, proving the result.

The remaining two families of differentials can be treated in a similar way.
\end{proof}

\noindent
We can now redo the calculation in \cite{HeMa97}:

\begin{corollary}
It follows that
\[ |\TR^n_{q-\lambda}(\bF_p)| = \begin{cases} p^n & \text{for $q=2m$ and $d_0(\lambda) \leq m$,} \\
p^{n-s} & \text{for $q=2m$ and $d_s(\lambda) \leq m \leq d_{s-1}(\lambda)$,} \\
0 & \text{for $q$ odd.} \end{cases} \]
\begin{proof}
In the case of $\mathbb{F}_p$, if we consider the spectral sequence
\[ E^1_{s,*}(-\lambda) = \pi_*T[-\lambda^{(n-1-s)}]_{hC_{p^s}} \Longrightarrow \TR^{n}_{*-\lambda}(\bF_p),\]
the only elements in odd total degree are those in the Tate piece. By Proposition \ref{prop:differentials}, all those elements are killed, hence $|\TR^n_{q- \lambda}(\mathbb{F}_p)| = 0$ for $q$ odd. In even degrees, since the differentials are surjective
\[ |\TR^n_{2m-\lambda}(\bF_p)| = \frac{\prod_s |E^1_{s, 2m}|}{\prod_s |E^1_{s,2m-1}|} =
\begin{cases}
p^n & \text{for $d_0(\lambda) \leq m,$} \\
p^{n-s} & \text{for $d_s(\lambda) \leq m \leq d_{s-1}(\lambda).$}
\end{cases} \]

\end{proof}
\end{corollary}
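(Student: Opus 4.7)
The plan is to exploit the even-degree concentration of $\pi_* T(\bF_p) = P(\mu_0)$: when $c=0$, the explicit description of $V_* T[-\alpha]_{hC_{p^\sigma}}$ given in \S\ref{s:hoorbVc-1} puts the homotopy fixed point piece entirely in even total degrees, while the Tate piece---which carries an extra $[-1]$ shift coming from the boundary map $\partial$---sits entirely in odd total degrees. Combined with the lemma following Definition \ref{def:Tatepiece} (every differential lands in the Tate piece, and every Tate-piece class is a permanent cycle), this at once handles $q$ odd: all of $\bigoplus_\sigma E^1_{\sigma, q}$ is then Tate, and Proposition \ref{prop:differentials} kills it all, so $\TR^n_{q-\lambda}(\bF_p) = 0$.

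For $q = 2m$ even I would run a multiplicative Euler characteristic argument across filtrations. The only $E^1$ classes in total degree $2m$ are hfp classes. These receive no incoming differentials, since any potential source would be an odd-degree Tate class at degree $2m+1$ which is a permanent cycle; meanwhile their outgoing differentials together exhaust every Tate class in degree $2m-1$ by Proposition \ref{prop:differentials}. Rank--nullity over the abelian groups at every filtration yields
\[
|\TR^n_{2m-\lambda}(\bF_p)| = \frac{\prod_\sigma |E^1_{\sigma, 2m}|}{\prod_\sigma |E^1_{\sigma, 2m-1}|},
\]
reducing everything to reading off the orders of the hfp and Tate pieces from the formulas of \S\ref{s:hoorbVc-1}.

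The final step is bookkeeping. Fixing a filtration $\sigma$ and writing $d = d_{n-1-\sigma}(\lambda)$, the formulas with $c=0$ and $\alpha = -\lambda^{(n-1-\sigma)}$ give: the hfp piece at degree $2m$ has order $p^{\sigma+1}$ for $m \geq d+\sigma$, order $p^{m-d+1}$ for $d \leq m < d+\sigma$, and is trivial for $m < d$; symmetrically, the Tate piece at degree $2m-1$ has order $p^\sigma$ for $m \geq d+\sigma$, order $p^{m-d}$ for $d < m < d+\sigma$, and is trivial for $m \leq d$. In every nontrivial subrange the ratio telescopes to $p$, while for $m < d$ both pieces are trivial so the ratio is $1$. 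Hence $|\TR^n_{2m-\lambda}(\bF_p)| = p^N$ where $N = \#\{i \in \{0,\ldots,n-1\} : d_i(\lambda) \leq m\}$, and monotonicity of $i \mapsto d_i(\lambda)$ converts this count into the three cases of the corollary. The main obstacle is not conceptual but care with the shift conventions of \S\ref{s:TateSSc-1}: a Tate-piece class $t^{-k}[-\alpha][-1]$ must be read in degree $2k-1$, with $[-\alpha]$ normalized at degree $0$, and a slip by $2d_0(\alpha)$ here would shift each ratio and distort the final answer.
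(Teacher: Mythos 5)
Your argument matches the paper's proof essentially step for step: the even/odd dichotomy between the homotopy fixed point and Tate pieces for $A=\bF_p$, Proposition \ref{prop:differentials} to kill every Tate class, and the multiplicative Euler characteristic $\prod_s |E^1_{s,2m}| / \prod_s |E^1_{s,2m-1}|$. Your explicit per-filtration bookkeeping of the orders (which the paper compresses into a single line) is correct, including the shift convention placing a Tate class $t^{-k}[-\alpha][-1]$ in degree $2k-1$.
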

From the spectral sequence for $\TR^n_{q-\lambda}(\bF_p; \bZ/p)$ in \S \ref{s:TRFp} we conclude that $\TR^n_{q-\lambda}(\bF_p)$ has just one summand. So we get the following result:

\begin{theorem}
Let $\lambda$ be a finite complex $S^1$-representation. Then
\[ \TR^n_{q- \lambda}(\mathbb{F}_p) \cong
\begin{cases}
\bZ/p^n & \text{for $q=em$ and $d_0(\lambda) \leq m$,} \\
\bZ/p^{n-s} & \text{for $q=2m$ and $d_s(\lambda) \leq m \leq d_{s-1}(\lambda)$,} \\
0 & \text{for $q$ odd.}
\end{cases} \]
\end{theorem}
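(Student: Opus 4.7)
The plan is to combine the order calculation from the preceding corollary with a rank count using mod $p$ coefficients. The corollary has already identified the order of $\TR^n_{q-\lambda}(\bF_p)$ in each case as $p^n$, $p^{n-s}$, or $0$, so the remaining task is to show that whenever the group is nonzero it is cyclic, i.e.\ a single summand.

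The first step is to run the spectral sequence
\[ E^1_{s,*}(-\lambda) = \pi_* T[-\lambda^{(n-1-s)}]_{hC_{p^s}}(\bF_p;\bZ/p) \Longrightarrow \TR^n_{q-\lambda}(\bF_p;\bZ/p) \]
using the explicit description from \S \ref{s:TRFp} in the case $l=1$, where $E(\beta_1)\otimes P(\mu_0)$ replaces $P(\mu_0)$ and the $v_0$-towers in each homotopy orbit summand collapse to a single copy of $\bF_p$. Proposition \ref{prop:differentials} applies in the mod $p$ setting word for word, so every class in the Tate piece is hit by a differential. Counting $\bF_p$-dimensions as in the proof of the previous corollary then gives
\[ \dim_{\bF_p} \TR^n_{2m-\lambda}(\bF_p;\bZ/p) = \begin{cases} 2 & \text{if $d_0(\lambda) \leq m$,} \\ 2 & \text{if $d_s(\lambda) \leq m \leq d_{s-1}(\lambda)$ with $s<n$,} \\ 0 & \text{otherwise,} \end{cases} \]
with a matching count in odd total degree reflecting the Bockstein. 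The factor of $2$ corresponds to one generator together with its Bockstein image.

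The second step is to translate this dimension count into a statement about the integral TR-group. Since $\TR^n_{q-\lambda}(\bF_p)$ is a finitely generated $p$-torsion abelian group, it decomposes as a direct sum of cyclic groups $\bigoplus_i \bZ/p^{a_i}$, and the number of summands equals $\dim_{\bF_p} \TR^n_{q-\lambda}(\bF_p)/p$. Taking the universal coefficient sequence for $\bZ/p$ against the cofiber sequence $S \xrightarrow{p} S \to S/p$ applied to $T(\bF_p)^{C_{p^{n-1}}}[-\lambda]$, the number of cyclic summands in the even degree $2m$ is bounded above by $\dim_{\bF_p} \TR^n_{2m-\lambda}(\bF_p;\bZ/p)$ minus the contribution from the Bockstein, which is exactly one. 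Combined with the order from the corollary, this forces the integral group to be a single cyclic group of the prescribed order.

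The main obstacle, and the reason the argument requires the mod $p$ spectral sequence rather than just the order calculation, is ruling out ``split'' answers such as $\bZ/p^{n-1}\oplus\bZ/p$ in place of $\bZ/p^n$. Once the number of summands is pinned down to one, the odd degree vanishing and the identification of the single cyclic summand follow immediately. The case $q$ odd is already settled by the corollary since the order is zero there.
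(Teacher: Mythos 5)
Your overall strategy matches the paper's: combine the order count from the preceding corollary with a mod~$p$ computation to show the group has a single cyclic summand, and note that odd-degree vanishing is already handled by the corollary. However, the central dimension count is incorrect and the step meant to compensate for it is not justified.

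You assert that $\dim_{\bF_p} \TR^n_{2m-\lambda}(\bF_p;\bZ/p) = 2$ when nonzero, accounting for ``one generator together with its Bockstein image,'' and then subtract one ``Bockstein contribution'' to arrive at a single summand. This does not hold up. Since the corollary already gives $\TR^n_{2m-1-\lambda}(\bF_p) = 0$, the universal coefficient sequence
\[
0 \to \TR^n_{2m-\lambda}(\bF_p)/p \to \TR^n_{2m-\lambda}(\bF_p;\bZ/p) \to {}_p\TR^n_{2m-1-\lambda}(\bF_p) \to 0
\]
collapses to an isomorphism $\TR^n_{2m-\lambda}(\bF_p;\bZ/p) \cong \TR^n_{2m-\lambda}(\bF_p)/p$, so the $\bF_p$-dimension of the mod~$p$ group is exactly the number of cyclic summands of the integral group. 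If the dimension were really $2$, the integral group would have two summands, which is the opposite of what you want. A quick sanity check: for $\lambda=0$, $n=1$ one has $\pi_{2m}(T(\bF_p);\bZ/p) \cong \bF_p$, dimension $1$, not $2$. (Also, a class in degree $2m$ and its Bockstein image do not live in the same degree, so they cannot both contribute to $\dim_{\bF_p}\TR^n_{2m-\lambda}(\bF_p;\bZ/p)$.) In short, the correct dimension is $1$, and no Bockstein subtraction is needed; with that correction your argument becomes the paper's.

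Two smaller points. First, you invoke Proposition~\ref{prop:differentials} ``word for word'' for the mod~$p$ spectral sequence over $\bF_p$, but that proposition is stated for the coefficient spectra $V$ fixed in the introduction (in the $\bF_p$ case, $V=S^0$, i.e., integral coefficients). The mod~$p^l$ version of the spectral sequence and its differentials are set up separately in \S\ref{s:TRFp}, and that is the statement one should cite and apply here. Second, when you do run the mod~$p$ spectral sequence, be aware that for $n\geq 1$ the homotopy orbit pieces carry an $E(u_n)$ factor rather than an $E(\beta_1)$ factor (the Bockstein has already been absorbed by the $d_2(\beta_1)=v_0$ differential in the Tate spectral sequence), so the generator-plus-Bockstein picture you describe does not reflect the actual $E^1$-term.
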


This agrees with the result of Hesselholt and Madsen \cite{HeMa97}. In the case of $A = \mathbb{Z}$ we can then prove Theorem \ref{thm:mainL}:

\begin{proof}[Proof of Theorem \ref{thm:mainL}]
As described in \S \ref{s:hotoTRSSVc-1} the $E^1$-term of the homotopy orbit to TR spectral sequence is composed of two families of small spectral sequences. In sufficiently high degrees we are left with the lower right-hand summands in the diagrams of \S \ref{s:hotoTRSSVc-1}. We first give the argument in high degrees and then describe the modifications needed in low degrees.


In the $E_\infty$-term we are left with
\[ E(\lambda_1) \otimes P_{r(n-1) + 1}(v_1)\{t^{-d_0(\lambda)} \mu_1^{p^{n-1}k - d_0(\lambda)-\delta^{n-1}_1(\lambda')}\} \]
from the first family of spectral sequences, and
\[ E(u_j) \otimes P_{r(j+1)}(v_1)\{t^{-d_0(\lambda)} \mu_1^{p^jk - d_0(\lambda)-\delta^j_1(\lambda')} \lambda_1 \} \]
for $0 \leq j \leq n-2$ and $k$ such that $v_p(k + d_j(\lambda)+\delta^{n-1-j}_1(\lambda^{(j+1)})) = 0$ from the second family.
Assume $q = 2m$ is even. The length of $\TR^n_{q - \lambda}(\mathbb{Z}; \mathbb{Z}/p)$ is the number of different ways can $2m$ be written as
\[ 2m = 2d_0(\lambda)+ 2 p^nk - 2p(d_0(\lambda)+\delta^{n-1}_1(\lambda')) + a(2p-2) \]
for $0\leq a \leq r(n-1)$ or
\[ 2m = 2d_0(\lambda)+ 2p^{j+1}k - 2p(d_0(\lambda)+\delta^j_1(\lambda')) + (a+1)(2p-2) \]
for $0 \leq j \leq n-2$,  $0 \leq a < r(j+1)$, and $v_p(k + d_j(\lambda)+\delta^{n-1-j}_1(\lambda^{(j+1)})) = 0$. Noting that
\[ \delta_1^j(\lambda') = \delta_1^{n-1}(\lambda') - p^j d_j(\lambda)-p^j \delta_1^{n-1-j}(\lambda^{(j+1)}) \]
we can rewrite these two equations as
\[ 2m - 2d_0(\lambda) + 2p(d_0(\lambda)+\delta^{n-1}_1(\lambda')) = 2p^nk + a(2p-2) \]
or
\begin{multline*} 2m - 2d_0(\lambda)+ 2p(d_0(\lambda)+\delta^{n-1}_1(\lambda')) \\
= 2p^{j+1}(k+ d_j(\lambda)+\delta^{n-j-1}_1(\lambda^{(j+1)})) + (a+1)(2p-2)
\end{multline*}
with the same conditions on $a, j$, and $k$ as above. It follows that the length of $\TR^n_{2m - \lambda}(\mathbb{Z};  \mathbb{Z}/p)$ is the number of ways to write $b=m-d_0(\lambda)+p(d_0(\lambda)+\delta_1^{n-1}(\lambda'))$ as
\[ b = p^nk + a(p-1) \]
where $0 \leq a \leq r(n-1)$ or
\[ b = p^{j+1}k + a(p-1) \]
where $0 \leq j \leq n-2$,  $1 \leq a \leq r(j+1)$, and $v_p(k) = 0$. Now, if $b=p^n k +a(p-1)$ with $1 \leq a \leq r(n-1)$ we can rewrite this as $b=p^{n-1} (pk)+a(p-1)$, and if $b=p^{j+1} k+a(p-1)$ with $1 \leq a \leq r(j)$ we can rewrite it as $b=p^j (pk)+a(p-1)$. Hence we have one class when $c=0$ modulo $p^n$ and one class for each way to write
\[ b = p^{j+1} k+a(p-1) \]
with $0 \leq j \leq n-2$ and $r(j)<a\leq r(j+1)$, with no condition on $\nu_p(k)$. There is exactly one such pair $(k,a)$ for each $j$, so we get $n-1$ classes, plus an additional class from the first family when $m=\delta_1^n(\lambda)$ modulo $p^n$ corresponding to $a=0$. The case $q=2m+1$ odd is similar.

If $q \geq 2d_0(\lambda)$, but $q$ is not sufficiently high that the spectral sequences degenerate with only the lower right hand summands in the $E^\infty$ term, the result follows by comparing with $\pi_*(T[-\mu]^{tC_{p^n}};\bZ/p)$ for some $\mu$ with $\mu'=\lambda$. Using that the mod $p$ homotopy groups of the Tate spectrum are $2p^n$-periodic and Theorem \ref{t:Tsalidis}, the result follows.


Part $2$ and $3$ follow by using that if $q<2d_0(\lambda)$ we have an isomorphism
\[ R : \TR^n_{q-\lambda}(\bZ;\bZ/p) \overset{\cong}{\to} \TR^{n-1}_{q-\lambda'}(\bZ;\bZ/p).\]

%
%

\end{proof}

\bibliographystyle{plain}
\bibliography{b}

\end{document}